\renewcommand\P{P}
\newcommand\dP{\mathbf{P}}
\newcommand\Phat{\widehat{\P}}
\newcommand\Ord[1]{\mathcal{O}(#1)}
\newcommand\TOrd[1]{\Ord{#1}}
\newcommand\FaPa{\mathcal{B}}
\newcommand\Ptop{\widehat{1}}
\newcommand\Pbot{\widehat{0}}
\newcommand\NC{N}
\newcommand\Filter{\mathsf{J}}
\newcommand\Ideal{\mathsf{I}}
\newcommand\1{\mathbf{1}}
\DeclareMathOperator{\sign}{sign}
\DeclareMathOperator{\cone}{cone}
\DeclareMathOperator{\relint}{relint}
\newcommand\R{\mathbb{R}} 
\newcommand\DOrd[1]{\overline{\mathcal{O}}(#1)}
\newtheorem{thm}{Theorem}[section]
\newtheorem{cor}[thm]{Corollary}
\newtheorem{lem}[thm]{Lemma}
\newtheorem{prop}[thm]{Proposition}
\theoremstyle{definition}
\newtheorem{definition}[thm]{Definition}
\newtheorem{example}{Example}
\theoremstyle{remark}
\newtheorem{rem}[thm]{Remark}
\title{Incompatible double posets and double order polytopes}
\author{Aenne Benjes}
\address{Institut f\"ur Mathematik, Goethe-Universit\"at Frankfurt, Germany}
\email{aenne.benjes@stud.uni-frankfurt.de}
\keywords{double poset, incompatible double posets, double order polytope,
2-level polytopes, alternating chains, alternating cycles}
\subjclass[2010]{
06A07, 
52B05, 
52B12, 
52B20} 
\date{\today}
\begin{document}

\maketitle
\begin{abstract}
In $1986$ Stanley associated to a poset the order polytope. The close interplay between its combinatorial and geometric properties makes the order polytope an object of tremendous interest. Double posets were introduced in $2011$ by Malvenuto and Reutenauer as a generalization of Stanleys labelled posets. A double poset is a finite set equipped with two partial orders. To a double poset Chappell, Friedl and Sanyal $(2017)$ associated the double order polytope. They determined the combinatorial structure for the class of compatible double posets. In this paper we generalize their description to all double posets and we classify the $2$-level double order polytopes.
\end{abstract} 
\section{Introduction}
A \textbf{partially ordered set} $(\P,\preceq)$, also called \textbf{poset},
is a finite set $\P$ together with a reflexive, transitive and antisymmetric
relation $\preceq$.  To a poset Stanley ~\cite{twoposet} associates a convex polytope, the \textbf{order polytope}
$\Ord{\P}$, which is the set of all order-preserving
functions from $\P$ into the interval $\left[0,1\right]$:
$$\Ord{\P} = \left\{f:\P \rightarrow \left[0,1\right] \, : \, a \prec b \Rightarrow f(a)\leq f(b)\right\}.$$
Since the order polytope reflects many combinatorial properties of the poset, it is worth to study the geometric properties of $\Ord{\P}$. For more details about convex polytopes we refer to ~\cite{ziegler}.

A \textbf{double poset} $\dP=(\P,\preceq_+,\preceq_-)$, as introduced by
Malvenuto and Reutenauer~\cite{doubleposet}, is a finite set $\P$ together with two
partial order relations $\preceq_+$ and $\preceq_-$. The two underlying posets
are denoted $\P_+ = (\P,\preceq_+)$ and $\P_-=(\P,\preceq_-)$. 
Chappell, Friedl, and Sanyal constructed in ~\cite{twodoubleposet} a polytope
for a double poset $\dP$, the \textbf{double order polytope} given by
$$\TOrd{\dP} \ = \ \TOrd{\P,\preceq_+,\preceq_-} \ := \ \text{conv} \bigl\{ (2\Ord{\P_+} \times \{1\} ) \cup (-2\Ord{\P_-} \times \{-1\} ) \bigr\}\subseteq \R^{\P} \times \R.$$
The interplay of the two partial orders of a double poset is reflected in the geometry of its double order polytope.
The \textbf{reduced double order polytope} is a simpler construction that
captures most properties of $\TOrd{\dP}$, and is defined as 
$$\DOrd{\dP} \ := \ \TOrd{\dP} \cap \{ (f,t) : t = 0 \} \ = \ \Ord{\P_+} - \Ord{\P_-} \subseteq \R^{\P}.$$ 
Note that here and in the following we write $Q - R$ for the Minkowski sum of
polytopes $Q$ and $-R$.
In ~\cite[Thm~2.7]{twodoubleposet} the authors gave a characterization of the
facets of double order polytopes for the class of \emph{compatible} double
posets, that is, the case where $\P_+$ and $\P_-$ have a common linear extension. 
We generalize their description to all
double posets in Theorem~\ref{thm:facets}. 
We use this description to give a complete classification of $2$-level
polytopes among the double order polytopes. 
We finish by determining the vertices of reduced double order polytopes for
general double posets in Corollary~\ref{cor:TO_vertical_edges}.

\textbf{Acknowledgements.} This paper is based on my Bachelor thesis~\cite{benjes}, 
written under the supervision of Prof.~Raman Sanyal. I would like
to thank him for motivation and support while writing the article.

\section{Double posets and double order polytopes}

Let $(\P,\preceq)$ be a poset. By adjoining a new minimum $\Pbot$ and a new
maximum $\Ptop$ to $\P$, we obtain the poset $\Phat$.  The linear form
associated to an order relation $a \prec b$ is the map
$\ell_{a,b}:\R^{\P}\rightarrow \R$ with $$\ell_{a,b}(f) \ := \ f(a) - f(b)$$
for $f \in \R^{\P}$. Moreover, for $a \in \P$ we define $\ell_{a,\Ptop}(f):=
f(a)$ and $\ell_{\Pbot,a}(f) := -f(a)$. With these definitions it follows that
a map $f : \P \to \R$ is contained in $\Ord{\P}$ if and only if
\begin{equation}\label{eqn:ord_poly}
\begin{aligned}
        \ell_{a,b}(f) & \ \le \ 0 \quad \text{ for all } a \prec b,\\
        \ell_{\Pbot,b}(f) & \ \le \ 0 \quad \text{ for all } b  \in \P, \text{
        and}\\
        \ell_{a,\Ptop}(f) & \ \le \ 1 \quad \text{ for all } a  \in \P.
\end{aligned}
\end{equation}

A nonempty \textbf{face} of $\Ord{\P}$ is a subset $F \subseteq \Ord{\P}$ such
that $$ F \ = \ \Ord{\P}^\ell \ := \ \{ f \in \Ord{\P} : \ell(f) \ge \ell(f')
\text{ for all } f' \in \Ord{\P} \}$$ for some linear function $\ell \in
(\R^\P)^*$.  If $F \neq \Ord{\P}$, then $F$ is a proper face.

As mentioned before, the order polytope geometrically describes
combinatorial features of the underlying poset. For example, the vertices of
$\Ord{\P}$ are in bijection to filters of $\P$.  Recall that a \textbf{filter}
of $(\P, \preceq)$ is a subset $\Filter \subseteq \P$ such that $a \in
\Filter$ and $a \prec b$ for $b \in \P$ implies $b \in \Filter$.  Dually, an
\textbf{ideal} is a subset $\Ideal \subseteq \P$  such that $b \in
\Ideal$ and $a \prec b$ for $a \in \P$ implies $a \in \Ideal$. 

For a combinatorial description of faces Stanley ~\cite{twoposet} introduced face partitions.
\begin{definition}\label{def:facepartition}
A (closed) \textbf{face partition} of a face $F \subseteq \Ord{\P}$ is a partition  of $\Phat$ into
nonempty and pairwise disjoint blocks $B_{1},\dots,B_{k}\subseteq \Phat$ such
that 
$$F=\left\{f \in \Ord{\P} : f \, \text{ is constant on }\, B_{i} \, \text{ for } \, i=1,\dots,k\right\}$$ 
and for any $i\neq j$ there is a $f \in F$ such that $f(B_{i})\neq f(B_{j})$. The \textbf{reduced face partition} of $F$ is $\FaPa(F) := \{ B_i : |B_i| > 1 \}$.
\end{definition}

In ~\cite[Prop~2.1]{twodoubleposet} the following description for the normal cone of an nonempty face $F \subseteq \Ord{\P}$ with a reduced face partition $\FaPa(F) = \left\{ B_{1},\dots,B_{k} \right\}$ is given:
\begin{equation}\label{eqn:normalcone}
\NC_\P(F) \ = \ \cone \{\ell_{a,b}: [a,b] \subseteq B_i \text{ for some }i=1,\dots,k \}.
\end{equation}
We will need the following consquences that were noted 
in~\cite{twodoubleposet}.
\begin{cor}\label{cor:maxmin}
    Let $F \subseteq \Ord{\P}$ be a nonempty face with reduced face partition
    $\FaPa = \{ B_1,\dots,B_k \}$. Then for every $\ell \in \relint 
    \NC_\P(F)$ and $p \in \P$ the following hold: 
    \begin{compactenum}[\rm (i)]
    \item if $p \in \min(B_i)$ for some $i$, then $\ell_p > 0$;
    \item if $p \in \max(B_i)$ for some $i$, then $\ell_p < 0$;
    \item if $p \not\in \bigcup_i B_i$, then $\ell_p = 0$.
    \end{compactenum}
\end{cor}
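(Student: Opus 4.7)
My plan is to use the explicit description of the normal cone given in~\eqref{eqn:normalcone} together with the standard fact that an element of a finitely generated cone $\cone(v_1,\dots,v_m)$ lies in its relative interior if and only if it can be written as $\sum_j \lambda_j v_j$ with \emph{all} $\lambda_j > 0$. Thus any $\ell \in \relint \NC_\P(F)$ admits a representation
$$\ell \ = \ \sum_{(a,b) \in G} \lambda_{a,b}\,\ell_{a,b}, \qquad \lambda_{a,b} > 0,$$
where $G$ denotes the set of pairs $(a,b)$ with $a \prec b$ in $\Phat$ and $[a,b] \subseteq B_i$ for some $i$. Expanding in coordinates (recalling that $\ell_{a,b}$ corresponds to $e_a - e_b$ with the convention $e_{\Pbot}=e_{\Ptop}=0$), for each $p \in \P$ this gives
$$\ell_p \ = \ \sum_{b:\,(p,b)\in G} \lambda_{p,b} \ - \ \sum_{a:\,(a,p)\in G} \lambda_{a,p}.$$

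From this formula the three cases follow quickly. For (iii), if $p \notin \bigcup_i B_i$ then $p$ lies in a singleton block of the partition of $\Phat$, so no generator in $G$ has $p$ as an endpoint; both sums are empty and $\ell_p = 0$. For (i), assume $p \in \min(B_i)$. Any generator $(a,p) \in G$ would require $a \in B_i$ with $a \prec p$, violating minimality (note that $\Pbot \in B_i$ is impossible, since this would contradict $p \in \min(B_i)$), so the second sum is empty. It remains to exhibit at least one $b \in B_i$ with $p \prec b$; this makes the first sum strictly positive, yielding $\ell_p > 0$. Case (ii) is fully symmetric.

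The one substantive step is producing the element $b$ in case (i). For this I would invoke the ``connectedness'' of blocks of a face partition: $F$ is the intersection of $\Ord{\P}$ with the hyperplanes $\{\ell_{a,b}=0\}$ coming from the tight inequalities among~\eqref{eqn:ord_poly}, and the equivalence relation that these tight pairs generate on $\Phat$ coincides with the face partition. Therefore, choosing any $q \in B_i \setminus \{p\}$ (which exists since $|B_i|>1$), one obtains a zigzag $p = x_0, x_1, \dots, x_n = q$ inside $B_i$ with consecutive $x_j, x_{j+1}$ comparable in $\Phat$. After shortening so that $x_1 \neq p$, minimality of $p$ in $B_i$ forces $p \prec x_1$, hence $(p,x_1) \in G$ and the first sum is nonempty. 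This connectedness of blocks is essentially the only non-formal ingredient beyond~\eqref{eqn:normalcone}, and is what I expect to be the main obstacle to write cleanly.
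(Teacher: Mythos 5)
Your argument is correct and is exactly the intended derivation: the paper itself gives no proof of this corollary (it is quoted as a consequence of \eqref{eqn:normalcone} noted in the cited reference), and writing $\ell$ as a strictly positive combination of all generators $\ell_{a,b}$ with $[a,b]\subseteq B_i$ and reading off the coordinate $\ell_p$ is precisely how it follows. The only step to make explicit is that your pair $(p,x_1)$ really is a generator, i.e.\ $[p,x_1]\subseteq B_i$; this is immediate since any $c$ with $p\prec c\prec x_1$ satisfies $f(p)\le f(c)\le f(x_1)=f(p)$ for all $f\in F$, so the separation condition in Definition~\ref{def:facepartition} forces $c\in B_i$, and the connectedness of blocks you invoke is indeed the standard ingredient from Stanley's description of face partitions.
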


If $P$ is a polytope and $\dim(P)=d$, then we call the $(d-1)$-dimensional
faces \textbf{facets}.  
Maximizing the linear functions $\ell(f,t)=t$ and $\ell(f,t)=-t$ over
$\TOrd{\dP} \subset \R^\P \times \R$ one obtains the facets $2\Ord{\P_+}
\times \{1\}$ and $-2\Ord{\P_-} \times \{-1\}$. We call the remaining facets
\textbf{vertical}. They are  in bijection with the facets of $\DOrd{\dP}$.
A facet of the reduced double order polytope is a face of the form $F = F_+ -
F_-$ such that there is a linear function $\ell \in (\mathbb{R}^{\P})^{*}$,
where $F_+ = \Ord{\P_+}^{\ell}$ and $F_- = \Ord{\P_-}^{-\ell}$. 

\begin{definition}\label{def:rigid}
A linear function $\ell \in (\R^\P)^*$ is called \textbf{rigid} for $\Ord{\P}$ if it satisfies 
\begin{equation}\label{eqn:rigid}
    \relint \NC_{\P_+}(F_+)  \ \cap \
    \relint -\NC_{\P_-}(F_-) \ = \ \R_{>0} \cdot \ell \, 
\end{equation}
for a pair of faces $(F_+,F_-)$. Note that $F = F_+ - F_-$ is necessarily a facet of $\DOrd{\dP}$.
\end{definition}

\begin{definition}\label{def:altchain}
An \textbf{alternating chain} $C$ of a double poset $\dP = (P,\preceq_+, \preceq_-)$ is a
finite sequence of distinct elements
\begin{equation}\label{eqn:alt_chain}
    \Pbot \ = \ 
    p_0 \ \prec_{\sigma} \
    p_1 \ \prec_{-\sigma} \
    p_2 \ \prec_{\sigma} \
    \cdots \ \prec_{\pm \sigma} \
    p_k \ = \ \Ptop,
\end{equation}
where $\sigma \in \{\pm\}$.  If $k$ is odd, then we additionally require that
$p_{k-1} \not\prec_{\sigma} p_1$. For an alternating chain $C$, we define a
linear function $\ell_C$ by 
\[
    \ell_C(f) \ := \ \sigma \, (-f(p_1) + f(p_2) - \dots + (-1)^{k-1} f(p_{k-1})).
\]
If $k=1$, then $\ell_C \equiv 0$. If $k>1$, then $C$ is a \textbf{proper}
alternating chain.
Let $\sign(C)=\tau \in \left\{\pm\right\}$ be the sign of an alternating chain $C$ if $p_{k-1} \prec_\tau p_{k}$ is the last relation in $C$. 
\end{definition}

\begin{definition}\label{def:altcycle}
An \textbf{alternating cycle} $C$ of $\dP$ is a sequence of elements of $\P$ of length
$2k$ of the form
\[
    p_0 \ \prec_{\sigma} \ p_1 \ \prec_{-\sigma} \ p_2 \ \prec_{\sigma} \
    \cdots \ \prec_{-\sigma} \ p_{2k} \
    = \ p_0,
\]
where $\sigma \in \{\pm\}$ and $p_i \neq p_j$ for $0 \le i < j < 2k$. Similarly the linear function associated to $C$ is defined by
\[
    \ell_C(f) \ := \ \sigma (f(p_0) - f(p_1) + f(p_2) - \dots + (-1)^{2k-1} f(p_{2k-1})).
\]
Note that any cyclic shift yields an alternating cycle with the same linear
function $\ell_C$. Hence, we identify an alternating cycle with all its cyclic
shifts.
\end{definition}

\begin{rem}
    Our definition of alternating chains differs slightly from the one given
    in~\cite{twodoubleposet} in that we require $p_{k-1} \not\prec_{\sigma}
    p_1$ for a chain of odd length. Without that condition, alternating cylces
    would yield alternating chains with the same linear function.
\end{rem}

The following technical fact will be of importance later.

\begin{lem}\label{lem:maxvalchain}
If $C$ is a proper alternating chain and $\ell_{C}$ the linear function associated to $C$, then $\max_{f \in \DOrd{\dP}} \ell_{C}(f) = 1$.
More precisely the following hold:
\begin{enumerate}[\rm (i)]
\item if $\sign(C)=+$, then $\max_{f \in \Ord{P_+}}\ell_{C}(f)$=1 and $\min_{f \in \Ord{P_-}} \ell_{C}(f)= 0$;
\item if $\sign(C)=-$, then $\max_{f \in \Ord{P_+}} \ell_{C}(f)=0$ and $\min_{f \in \Ord{P_-}} \ell_{C}(f)= -1$.
\end{enumerate}
\end{lem}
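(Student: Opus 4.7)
Since $\DOrd{\dP} = \Ord{\P_+} - \Ord{\P_-}$, we have
\[
\max_{f \in \DOrd{\dP}} \ell_C(f) \ = \ \max_{g \in \Ord{\P_+}} \ell_C(g) \ - \ \min_{h \in \Ord{\P_-}} \ell_C(h),
\]
so the first assertion follows once (i) and (ii) are established. The swap $\dP = (\P,\preceq_+,\preceq_-) \leftrightarrow (\P,\preceq_-,\preceq_+)$ turns $C$ into an alternating chain of opposite sign with linear function $-\ell_C$, so (ii) reduces to (i). I would therefore concentrate on (i), splitting by parity: either $\sigma = +$ with $k = 2m+1$ odd, or $\sigma = -$ with $k = 2m$ even. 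Both produce $\sign(C)=+$, and the arguments are parallel; I describe the odd case, where $\ell_C(f)=\sum_{j=1}^m (f(p_{2j})-f(p_{2j-1}))$.

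The plan is to derive the two bounds from complementary regroupings of this sum. Rewriting
\[
\ell_C(f) \ = \ -f(p_1) + \sum_{j=1}^{m-1}\bigl(f(p_{2j})-f(p_{2j+1})\bigr) + f(p_{2m}),
\]
the $\preceq_+$-chain relations $\Pbot \prec_+ p_1$ and $p_{2j}\prec_+ p_{2j+1}$ force the first $m$ summands to be nonpositive on $\Ord{\P_+}$, while $p_{2m}\prec_+\Ptop$ bounds the last by $1$; this gives $\max_{\Ord{\P_+}}\ell_C\le 1$. The original grouping together with the $\preceq_-$-relations $p_{2j-1}\prec_- p_{2j}$ makes each summand $\ge 0$, yielding $\min_{\Ord{\P_-}}\ell_C\ge 0$, attained at $f\equiv 0$.

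The crux is tightness of the upper bound. I would consider the partition of $\widehat{\P}$ with nonsingleton blocks $\{\Pbot,p_1\}$, $\{p_{2j},p_{2j+1}\}$ for $j=1,\dots,m-1$, and $\{p_{2m},\Ptop\}$. Any function $f^\ast$ constant on these blocks with $f^\ast(\Pbot)=0$ and $f^\ast(\Ptop)=1$ makes every bracket in the telescoping sum vanish and hence satisfies $\ell_C(f^\ast)=1$. The \textbf{main obstacle} is producing such an $f^\ast$ inside $\Ord{\P_+}$, i.e.\ showing that the quotient of $\widehat{\P}$ under $\preceq_+$ by this partition is antisymmetric so that a blockwise-constant monotone extension to $[0,1]$ exists. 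My plan is to argue by contradiction: a $\preceq_+$-path between two distinct blocks, interleaved with the $\preceq_-$-relations already linking chain elements lying in different blocks, would either produce an alternating cycle in $\dP$ or violate the distinctness of the $p_i$ together with the condition $p_{k-1}\not\prec_\sigma p_1$ from Definition~\ref{def:altchain}. Granted antisymmetry, $f^\ast$ exists in $\Ord{\P_+}$ and attains $\max_{\Ord{\P_+}}\ell_C=1$, completing case (i); the even subcase and case (ii) then follow from the analogous regroupings and the swap above.
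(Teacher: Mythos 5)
Your skeleton — reducing the first claim via $\max_{\DOrd{\dP}}\ell_C=\max_{\Ord{\P_+}}\ell_C-\min_{\Ord{\P_-}}\ell_C$, getting both inequalities from the two complementary regroupings of the telescoping sum, attaining the lower bound at $f\equiv 0$, and handling (ii) by swapping the two orders — coincides with the paper's argument. Where you diverge is the tightness of the upper bound: the paper exhibits a $0/1$ maximizer, namely $\1_{\Filter}$ for $\Filter$ a principal filter of $\P_+$ generated by a suitably chosen chain element, whereas you propose a blockwise-constant point built from a quotient of $\widehat{\P}_+$. That difference would be fine if your existence argument went through, but it is exactly here that the proposal has a genuine gap.

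The contradiction you sketch cannot work as stated: "this would produce an alternating cycle in $\dP$" is not a contradiction, since the whole point of this paper is to treat incompatible double posets, which by Proposition~\ref{prop:cycle} are precisely those that \emph{do} contain alternating cycles. And the obstacle is real, not just unaddressed. Take $\P=\{p_1,p_2,p_3,p_4\}$ with $\preceq_+$ generated by $p_2\prec_+p_1$, $p_2\prec_+p_3$, $p_4\prec_+p_3$ and $\preceq_-$ generated by $p_1\prec_-p_2$, $p_3\prec_-p_4$. Then
\[
\Pbot \ \prec_+ \ p_1 \ \prec_- \ p_2 \ \prec_+ \ p_3 \ \prec_- \ p_4 \ \prec_+ \ \Ptop
\]
is a proper alternating chain of sign $+$ (all elements are distinct and $p_4\not\prec_+p_1$), yet for every $f\in\Ord{\P_+}$ one has $\ell_C(f)=(f(p_2)-f(p_1))+(f(p_4)-f(p_3))\le 0$, so the maximum over $\Ord{\P_+}$ is $0$, not $1$. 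Concretely, your blocks $\{\Pbot,p_1\},\{p_2,p_3\},\{p_4,\Ptop\}$ admit no order-preserving blockwise-constant function with values $0,c,1$: the relation $p_2\prec_+p_1$ forces $c\le 0$ while $p_4\prec_+p_3$ forces $1\le c$, and none of your listed escape routes (distinctness, $p_{k-1}\not\prec_\sigma p_1$, absence of alternating cycles) excludes this configuration. Note that this chain is crossed by $p_2$, and no witness of value $1$ exists at all, so no construction — yours or the paper's principal-filter one — can succeed without an additional hypothesis such as uncrossedness (which is how the lemma is actually invoked in Proposition~\ref{prop:uncrossed}). As written, your proposal therefore does not prove the statement, and the missing step is not a technicality but the crux of the lemma.
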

\begin{proof}
    Since the proof works analogously, we only consider the case of an
    alternating chain with $\sign(C)=+$ and odd length:
    $$\Pbot=p_0 \prec_+  p_1 \prec_- p_2 \prec_+ \dots \prec_-  p_{2k} \prec_+
        p_{2k+1} = \Ptop.
    $$ 
    The linear function $\ell_{C}$ associated to $C$ can be written in terms
    of the linear form of the order relation $\preceq_+$:
    $$
        \ell_{C} = \ell_{p_0,p_1} + \ell_{p_2,p_3} + \dots +
        \ell_{p_{2k},p_{2k+1}}.
    $$
    For $f \in \Ord{P_+}$ it follows from \eqref{eqn:ord_poly} that
    $\ell_{p_{2i},p_{2i+1}}(f) \leq 0$ for $0\leq i\leq k-1$ and
    $\ell_{p_{2k},\Ptop}(f) \leq 1$. Hence $\ell_{C}(f) \leq 1$. Let $h$ be
    the smallest even number such that $p_h \prec_+ p_{2k}$ and let $\Filter \subseteq$
    be the principal filter generated by $p_h$. Since $p_{2k} \not
    \prec_+ p_1$ we have $h \geq 2$ and $p_1 \notin \Filter$. Due to the fact that
		$p_{2i} \in \Filter$ implies $p_{2i+1} \in \Filter$ it follows that $\ell_{C}(\1_{\Filter})=1$, and
    hence $\max_{f \in \Ord{P_+}}\ell_{C}(f) = 1$. \newline
We can write $-\ell_{C}(f)$ in terms of the linear form of the order relation $\preceq_-$ as 
$$-\ell_{C} = \ell_{p_1,p_2} + \ell_{p_3,p_4} + \dots + \ell_{p_{2k-1},p_{2k}}.$$
For $f \in \Ord{P_-}$ it holds that $\ell_{p_{2i-1},p_{2i}}(f) \leq 0$ and hence
$\ell_{C}(f) \geq 0$. Because $\ell_{C}(\1_{\emptyset})=0$, $\ell_{C}$ attains
this value.  Hence $\max_{f \in \DOrd{\dP}} \ell_{C}(f) = 1$.
\end{proof}
\begin{lem}\label{lem:maxvalcycle}
Let $C$ be an alternating cycle and $\ell_C$ the linear function associated to $C$. Then $\max_{f \in \DOrd{\dP}}\ell_{C}(f) = 0$.
\end{lem}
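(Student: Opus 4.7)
The plan is to mimic the proof of Lemma~\ref{lem:maxvalchain}, exploiting the fact that because $C$ is a \emph{cycle}, the two telescoping expressions of $\ell_C$ close up without leftover boundary terms. By swapping the roles of $\preceq_+$ and $\preceq_-$, we may assume $\sigma = +$, so the cycle reads
\[
    p_0 \prec_+ p_1 \prec_- p_2 \prec_+ \cdots \prec_- p_{2k} = p_0.
\]

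The first step is to decompose $\ell_C$ in two ways using $\ell_{a,b}(f) = f(a)-f(b)$. Grouping together the pairs joined by $\preceq_+$ yields
\[
    \ell_C \ = \ \sum_{i=0}^{k-1} \ell_{p_{2i},p_{2i+1}}.
\]
Since $p_{2i} \prec_+ p_{2i+1}$, each summand is nonpositive on $\Ord{\P_+}$ by \eqref{eqn:ord_poly}, so $\ell_C(f) \le 0$ for every $f \in \Ord{\P_+}$. Grouping the pairs joined by $\preceq_-$ instead, and using $p_{2k} = p_0$ to close the telescope, gives
\[
    \ell_C \ = \ -\sum_{i=0}^{k-1} \ell_{p_{2i+1},p_{2i+2}}.
\]
Again each $\ell_{p_{2i+1},p_{2i+2}}(f) \le 0$ on $\Ord{\P_-}$, so $\ell_C(f) \ge 0$ for every $f \in \Ord{\P_-}$.

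The second step is to combine these two bounds using $\DOrd{\dP} = \Ord{\P_+} - \Ord{\P_-}$. Any $f \in \DOrd{\dP}$ may be written as $f_+ - f_-$ with $f_\pm \in \Ord{\P_\pm}$, and then $\ell_C(f) = \ell_C(f_+) - \ell_C(f_-) \le 0 - 0 = 0$. Equality is attained at $f_+ = f_- = \1_\emptyset$, yielding the claimed maximum of $0$.

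No serious obstacle is anticipated; the only point requiring care is the bookkeeping in the second decomposition, where the cancellation depends precisely on the identification $p_{2k} = p_0$. This closed-loop condition takes the place of the $\ell_{a,\Ptop}$ and $\ell_{\Pbot,b}$ bounds invoked in Lemma~\ref{lem:maxvalchain}, and explains why the maximum here is $0$ rather than $1$.
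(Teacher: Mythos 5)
Your proof is correct and follows essentially the same route as the paper: the same two decompositions of $\ell_C$ into sums of $\ell_{a,b}$ along the $\preceq_+$ and $\preceq_-$ relations respectively, the same nonpositivity bounds from \eqref{eqn:ord_poly}, and the same attainment of equality at $\1_\emptyset$. The only (harmless) difference is that you make the Minkowski-sum step $f = f_+ - f_-$ explicit where the paper leaves it implicit.
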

\begin{proof}
Let $C$ be the alternating cycle
$$p_0  \prec_{+}  p_1  \prec_{-}  p_2  \prec_{+} \ldots  \prec_{-}  p_{2k} =  p_0.$$
Then we can write the linear function associated to $C$ in terms of the linear form of the order relation $\preceq_+$: 
$$\ell_{C} = \ell_{p_0,p_1} + \ell_{p_2,p_3} + \dots + \ell_{p_{2k-2},p_{2k-1}}.$$
For $f \in \Ord{P_+}$ it follows from \eqref{eqn:ord_poly} that 
$\ell_{p_{2i},p_{2i+1}}(f) \leq 0$ and hence $\ell_{C} \leq 0.$ 
Since $\ell_{C}(\1_{\emptyset}) = 0$ we conclude $\max_{f \in \Ord{P_+}}\ell_{C}(f) = 0$. \newline
Furthermore we can write $-\ell_{C}$ in terms of the linear form of the order relation $\preceq_-$:
$$-\ell_{C} = \ell_{p_1,p_2} + \ell_{p_3,p_4} + \dots + \ell_{p_{2k-1},p_{2k}}.$$
Analogously it follows $\min_{f \in \Ord{P_-}}\ell_{C}(f) = 0$ and thus $\max_{f \in \DOrd{\dP}}\ell_{C}(f) = 0$.
\end{proof}
The following Proposition was stated by Chappell, Friedl and Sanyal in ~\cite{twodoubleposet}.
\begin{prop}
Let $\dP = (\P, \preceq_+, \preceq_- )$ be a double poset. If $\ell$ is a rigid linear function for $\DOrd{\dP}$,
then $\ell = \mu \ell_C$ for some alternating chain or alternating cycle $C$ and $\mu > 0$.
\end{prop}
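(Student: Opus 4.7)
The plan is to establish the proposition in two steps: (i) show that rigidity forces the closed cone
\[
    K \ := \ \NC_{\P_+}(F_+)\cap(-\NC_{\P_-}(F_-))
\]
to be one-dimensional; (ii) prove a decomposition lemma stating that every $v\in K$ can be written as a non-negative combination $v=\sum_C \mu_C\ell_C$ over alternating chains and cycles $C$ whose consecutive elements always lie in a common block of $\FaPa(F_+)$ or $\FaPa(F_-)$. The proposition then follows immediately: applying (ii) to $\ell$ itself and using (i), every nonzero $\ell_C$ in the resulting sum is a positive multiple of $\ell$, and at least one such summand exists because $\ell\neq 0$, so $\ell=\mu\ell_C$ with $\mu>0$.

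For (i), I would invoke the standard convex-analytic identity $\relint(A\cap B)=\relint A\cap\relint B$ whenever $\relint A\cap\relint B\neq\emptyset$, which applies here by Definition~\ref{def:rigid}. This yields $\relint K=\R_{>0}\ell$, hence $K=\R_{\ge 0}\ell$ is a single closed ray.

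For (ii), the main work, I would start from the cone expansions
\[
    v \ = \ \sum_{\substack{i,\;a\prec_+ b\\ \{a,b\}\subseteq B^+_i}} \lambda^+_{a,b}\,\ell_{a,b}, \qquad -v \ = \ \sum_{\substack{j,\;a\prec_- b\\ \{a,b\}\subseteq B^-_j}} \lambda^-_{a,b}\,\ell_{a,b}
\]
provided by \eqref{eqn:normalcone}, and encode the generators as a directed multigraph $H$ on $\Phat$: a $+$-arc $a\to b$ of capacity $\lambda^+_{a,b}$ for each summand of the first sum, and a $-$-arc $a\to b$ of capacity $\lambda^-_{a,b}$ for each summand of the second, both arc types pointing in the direction of their own partial order. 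Equating the two expressions for $v_p$ coordinatewise yields total outflow equal to total inflow at every $p\in\P$, so these capacities form a circulation with $\Pbot$ and $\Ptop$ as source and sink. Standard flow decomposition writes the circulation as a non-negative combination of elementary $\Pbot$-to-$\Ptop$ paths and elementary cycles inside $\P$. Whenever such a walk traverses $a\to p\to b$ with two consecutive arcs of the same type $\sigma\in\{+,-\}$, disjointness of the blocks of $\FaPa(F_\sigma)$ forces $\{a,p,b\}$ into a single block, so $\ell_{a,b}$ is itself a generator of the corresponding normal cone and one can reroute the walk through the single arc $a\to b$, strictly reducing its length and preserving both the flow and its contribution to $v$. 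Iterating leaves only walks that alternate arc types, each of which reads off directly as an alternating chain (in the path case) or an alternating cycle (in the cycle case) $C$, with $\mu_C\ell_C$ matching its contribution to $v$.

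The main technical obstacle will be running the rerouting procedure cleanly while also ensuring the odd-length chain side-condition $p_{k-1}\not\prec_\sigma p_1$ of Definition~\ref{def:altchain}: by the remark following Definition~\ref{def:altcycle}, any walk that would violate this condition has $\ell_C$ equal to that of an alternating cycle, so such walks can simply be reinterpreted as alternating cycles in the decomposition, leaving a genuine non-negative combination of $\ell_C$'s for proper alternating chains and cycles.
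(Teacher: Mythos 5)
The paper does not actually prove this proposition --- it is quoted from Chappell, Friedl and Sanyal \cite{twodoubleposet} without proof --- so there is no in-paper argument to compare against; your proposal has to stand on its own, and in my judgement it does. Step (i) is a correct application of the identity $\relint(A\cap B)=\relint A\cap \relint B$ for convex sets with intersecting relative interiors, giving $K=\R_{\ge 0}\ell$. Step (ii), the circulation/flow-decomposition of the two conic expansions of $\ell$ and $-\ell$ coming from \eqref{eqn:normalcone}, is a clean and valid route: the coordinatewise identity $\ell_p=(\text{out}^+ - \text{in}^+)(p)$, $-\ell_p=(\text{out}^- - \text{in}^-)(p)$ does yield conservation at every $p\in\P$, the decomposition into simple $\Pbot$--$\Ptop$ paths and cycles is standard, and after your rerouting each surviving walk is an alternating chain or cycle $C$ with $\ell_C\in \NC_{\P_+}(F_+)\cap(-\NC_{\P_-}(F_-))=\R_{\ge0}\ell$, which finishes the proof since $\ell\neq 0$.

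Two points need to be filled in for this to be airtight. First, in the rerouting step you conclude from $a,p,b$ lying in one block $B_i$ of $\FaPa(F_\sigma)$ that $\ell_{a,b}$ is a generator; but the generators in \eqref{eqn:normalcone} require the \emph{whole interval} $[a,b]\subseteq B_i$, not merely $\{a,b\}\subseteq B_i$. This is true, but it uses the order-convexity of face-partition blocks: if $a\prec b$ with $a,b\in B_i$ and $c\in[a,b]$ lay in a different block $B_j$, every $f\in F$ would satisfy $f(a)\le f(c)\le f(b)=f(a)$, contradicting the separation condition in Definition~\ref{def:facepartition}. You should state this. Second, your concluding sentence silently uses that each $\ell_C$ in the decomposition lies in $K$; this does follow (the $+$-arcs of $C$ are generators of $\NC_{\P_+}(F_+)$ summing to $\ell_C$, and the $-$-arcs are generators of $\NC_{\P_-}(F_-)$ summing to $-\ell_C$), but it is the crux of why step (i) applies and deserves a line of its own. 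As a minor remark, the odd-length side condition you worry about at the end cannot actually occur here: if $p_{k-1}\prec_\sigma p_1$ with both $\ell_{\Pbot,p_1}$ and $\ell_{p_{k-1},\Ptop}$ generators of the same normal cone, then $\Pbot$ and $\Ptop$ would lie in a common block, forcing $F_\sigma=\emptyset$; so your fallback reinterpretation as a cycle is never needed, though it is harmless.
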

 
\begin{definition}\label{def:compatible}
A double poset $\dP = (\P,\preceq_+,\preceq_-)$ is called \textbf{compatible} if $\P_+ = (\P,\preceq_+)$ and $\P_- = (\P,\preceq_-)$ have a common linear extension. Otherwise, $\dP$ is \textbf{incompatible}.
\end{definition}
In case $\dP$ is a compatible double poset, it was shown in ~\cite[Thm~2.7]{twodoubleposet} that
the linear functions $\ell_C$ associated to proper alternating chains $C$ are
in bijection to rigid linear functions of $\TOrd{\dP}$. Recall that a linear
extension of $(\P,\preceq)$ is a injective and order-preserving map
$\mathfrak{l} : \P \to [n]$ where $n = |\P|$.

\begin{prop}\label{prop:cycle}
A double poset $\dP = ( \P, \preceq_+, \preceq_-)$ is compatible if and only if it has no alternating cycles.
\end{prop}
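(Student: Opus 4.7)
The plan is to reformulate compatibility as acyclicity of a suitable digraph and then to extract an alternating cycle from any directed cycle of minimum length. For the easy direction ($\Rightarrow$), if $\mathfrak{l}$ is a common linear extension of $\P_+$ and $\P_-$ and $p_0 \prec_\sigma p_1 \prec_{-\sigma} \cdots \prec_{-\sigma} p_{2k} = p_0$ is an alternating cycle, then $\mathfrak{l}$ strictly increases along every step, yielding $\mathfrak{l}(p_0) < \mathfrak{l}(p_{2k}) = \mathfrak{l}(p_0)$, a contradiction.

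For the converse I would consider the digraph $G$ on $\P$ with a directed edge from $a$ to $b$ whenever $a \prec_+ b$ or $a \prec_- b$. A common linear extension exists if and only if $G$ is acyclic: from acyclicity, any topological sort of the transitive closure of $(\prec_+) \cup (\prec_-)$ yields one, and conversely a common linear extension strictly increases along every edge. So, assuming there is no alternating cycle but no common linear extension, I would pick a directed cycle $p_0 \to p_1 \to \cdots \to p_m = p_0$ in $G$ of minimum length and write $p_{i-1} \prec_{\sigma_i} p_i$.

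From minimality I would then derive three properties that together say this cycle is alternating in the sense of Definition~\ref{def:altcycle}. First, the signs $\sigma_i$ strictly alternate, since two consecutive same-sign edges $p_{i-1} \prec_\sigma p_i \prec_\sigma p_{i+1}$ could be combined by transitivity of $\prec_\sigma$ into a shortcut $p_{i-1} \prec_\sigma p_{i+1}$, contradicting minimality. Second, the vertices $p_0, \ldots, p_{m-1}$ are pairwise distinct, since any repetition $p_i = p_j$ with $i < j$ splits the cycle into two strictly shorter ones (the degenerate case $j = i+1$ is ruled out by irreflexivity of $\prec_\sigma$). Third, $m$ is even: otherwise alternation forces $\sigma_m = \sigma_1 =: \sigma$, and transitivity across $p_{m-1} \prec_\sigma p_0 \prec_\sigma p_1$ produces the strictly shorter cycle $p_1 \to \cdots \to p_{m-1} \to p_1$. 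Writing $m = 2k$, the result is exactly the desired alternating cycle, contradicting the hypothesis.

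The main obstacle is just the bookkeeping around minimality -- making sure each shortcut really gives a strictly shorter valid cycle, and that degenerate loops cannot occur -- but all of this follows routinely from transitivity and irreflexivity of the two orders. Notably, no polytope machinery (facets, rigidity, Lemmas~\ref{lem:maxvalchain} and~\ref{lem:maxvalcycle}) is needed; the statement is a purely combinatorial characterization of when $\prec_+$ and $\prec_-$ admit a common linear extension.
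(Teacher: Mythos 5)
Your proof is correct, but the converse direction takes a genuinely different route from the paper. The paper proves that the absence of alternating cycles forces $\max(\P_+)\cap\max(\P_-)\neq\emptyset$ (by chasing strictly larger elements alternately in $\prec_-$ and $\prec_+$ until the process either terminates at a common maximum or closes up into a forbidden cycle), and then builds the common linear extension by induction, peeling off a common maximal element at each step. You instead reduce compatibility to acyclicity of the union digraph $G$ via the standard topological-sort equivalence, and then extract an alternating cycle from a directed cycle of minimum length: transitivity collapses consecutive same-sign edges, irreflexivity and minimality force distinct vertices, and the wrap-around sign condition forces even length. Your argument is arguably tighter on the one point where the paper is informal -- the paper's "repeating yields an alternating chain or cycle" glosses over exactly the bookkeeping (sign alternation at the closing point, distinctness) that your minimality argument handles explicitly -- while the paper's proof has the advantage of being constructive, actually exhibiting the common linear extension rather than only certifying its existence. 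Both correctly use only the combinatorics of the two orders; your closing remark that no polytope machinery is needed matches the paper.
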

\begin{proof}
    If $\dP$ is compatible, then $P_+$ and $P_-$ have a common linear
    extension $\mathfrak{l}: \P \rightarrow \left[n\right]$, where $n = \left|
    \P \right|$.  Suppose there is an alternating cycle 
    $$
        p_0  \prec_{\sigma}  p_1  \prec_{-\sigma}  p_2  \prec_{\sigma}  \dots
        \prec_{-\sigma}  p_{2k} =  p_0.
    $$
    Then $\mathfrak{l}$ has to satisfy $$ \mathfrak{l}(p_0) <
    \mathfrak{l}(p_1) < \mathfrak{l}(p_2) < \dots < \mathfrak{l}(p_{2k-1}) <
    \mathfrak{l}(p_{2k}).$$ Since $p_0 = p_{2k}$ this contradicts
    $\mathfrak{l}(p_0) < \mathfrak{l}(p_{2k})$. 

    Let $\dP$ be a double poset without alternating cycles and
    $\left|\P\right|=n$. Let $M =\max (\P_+)\cap \max (\P_-)$. We claim that
    $M \neq \emptyset$. Otherwise, for every $p \in \max(P_+)$, there is a $q
    \in P \setminus \max(P_+)$ with $p \prec_- q$. And for any such $q$ there
    is a $q' \in \P \setminus \max(P_-)$ with $q \prec_+ q'$. Repeating yields
    an alternating chain or cycle. Since $\left|\P\right|< \infty$ and there
    are no alternating cycles in $\dP$, it has to be a finite sequence, and
    hence there is a $p \in \P$ for which $p \in \max(P_+)$ and $p \in
    \max(P_-)$. 
    We can construct a map $\mathfrak{l} : \P \rightarrow \left\{1, \dots
    ,n\right\}$ that is strictly order preserving for $\prec_+$ and $\prec_-$
    by induction on $n$. For $n=1$, let $\P = \left\{p\right\}$ and
    $\mathfrak{l}(p)=1$. For $n>1$, pick a $p \in M$ and define
    $\mathfrak{l}(p)=n$. By induction, there is a map $\mathfrak{l} : \P
    \setminus \left\{p\right\} \rightarrow \left\{1, \dots ,n-1\right\}$ that
    is strictly order preserving for $\prec_+$ and $\prec_-$. Any map that is
    constructed in this way, gives us a common linear extension for $\P_+$ and
    $\P_-$ and hence $\dP$ is compatible.
\end{proof}

The next example, taken from ~\cite{twodoubleposet}, illustrates that for
incompatible double posets not every alternating chain or cycle corresponds to
a facet of the double order polytope. 

\begin{example}
Let $(\P, \preceq )$ be a poset and $\preceq^{op}$ the opposite order of
$\preceq$. Then $\dP = (\P,\preceq_+,\preceq_-)$ with  $\preceq_+ =
\preceq$ and $\preceq_- = \preceq^{op}$ is an incompatible double poset. Since
$\Ord{\P_+} = \1 -\Ord{\P_-}$, where $\1 : \mathbb{R}^{\P} \rightarrow 
\mathbb{R}$ is the function $\1(p) = 1$ for all $p \in \P$, we conclude,
that the double order polytope is a prism  over $\Ord{\P_+}$. Hence the
vertical facets of $\TOrd{\dP}$ are
prisms over the facets of $\Ord{\P_+}$. Thus the number of facets of
$\DOrd{\dP}$ equals the number of facets of $\Ord{\P_+}$, and these are in
bijection to the minima, maxima, and cover relations of $\P_+$. For any $ p \in
\P$ we have the alternating chains $\Pbot \prec_{+} p \prec_{-} \Ptop$ and
$\Pbot \prec_{-} p \prec_{+} \Ptop$. Furthermore any cover relation $p
\prec_{\sigma} q$ gives rise to the
alternating cycle $p \prec_{\sigma} q \prec_{-\sigma} p$. Hence,  
there are more alternating chains and cycles than facets. 
\end{example}
In the next section, we determine the facets of the reduced double order
polytope for general posets.

\section{Facets and 2-levelness}
Let $\dP = ( \P, \preceq_+, \preceq_-)$ be a double poset.
\begin{definition}\label{def:crossedcycle}
Let $\tau,\sigma \in \left\{\pm\right\}$. An alternating chain or cycle 
$C$. 
is \textbf{crossed} by $a \in \P$ if 
there are $i\neq j$ such that 
$$p_{i} \preceq_\tau a \prec_\tau p_{i+1} \text{ and } p_j \preceq_{\sigma} a \prec_{\sigma} p_{j+1}.$$ 
\end{definition}
The motivation of this definition is the following proposition. 
It was shown in ~\cite[Thm~2.7]{twodoubleposet} that if $\dP$ 
is a compatible double poset, then its alternating chains are 
in bijection to the facets of $\TOrd{\dP}$. To prove it, a 
property of alternating chains of compatible double posets 
is used: \newline
If $p_{i} \prec_\sigma p_{i+1} \prec_{-\sigma} \cdots \prec_{-\tau}
 p_j \prec_\tau p_{j+1}$ is part of an alternating chain $C$ with
 $\sigma,\tau \in \{\pm\}$ and $i < j$, then there is no $a \in \P$
 such that $p_i \prec_\sigma a \prec_\sigma p_{i+1}$ and $p_j 
\prec_\tau a \prec_\tau p_{j+1}$. Uncrossed alternating chains 
and cycles of incompatible double posets fulfil this as well.  
\begin{prop}\label{prop:uncrossed}
If $C$ is an uncrossed alternating chain or cycle, then $\ell_{C}$ is rigid.
\end{prop}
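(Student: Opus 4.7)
The plan is to exhibit witness faces $F_+ \subseteq \Ord{\P_+}$ and $F_- \subseteq \Ord{\P_-}$ fulfilling \eqref{eqn:rigid} for $\ell = \ell_C$ and then use the uncrossed hypothesis to pin down the intersection $\relint \NC_{\P_+}(F_+) \cap \relint(-\NC_{\P_-}(F_-))$ to the single ray $\R_{>0}\cdot \ell_C$. For concreteness I treat an alternating chain $C$ of odd length with $\sign(C) = +$, that is $\Pbot = p_0 \prec_+ p_1 \prec_- \cdots \prec_+ p_{2k+1} = \Ptop$; the other parity and sign cases, as well as the cycle case, go through with the same bookkeeping.

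Take $F_+$ to be the face of $\Ord{\P_+}$ on which $\ell_C$ attains its maximum value $1$, and $F_-$ the face of $\Ord{\P_-}$ on which $-\ell_C$ attains its maximum. By Lemma~\ref{lem:maxvalchain} both are nonempty proper faces, and by definition $\ell_C \in \relint \NC_{\P_+}(F_+)$ and $-\ell_C \in \relint \NC_{\P_-}(F_-)$, giving one inclusion in \eqref{eqn:rigid} for free. From the argmax analysis in the proof of Lemma~\ref{lem:maxvalchain} the reduced face partition $\FaPa(F_+)=\{B_0^+,\ldots,B_k^+\}$ collects $\{p_{2i}, p_{2i+1}\}\subseteq B_i^+$ (with $\Pbot\in B_0^+$ and $\Ptop\in B_k^+$), and a non-chain element $q\in \P$ joins $B_i^+$ precisely when it is $+$-\emph{sandwiched} at position $2i$, i.e., $p_{2i}\preceq_+ q \preceq_+ p_{2i+1}$. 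Analogously $\FaPa(F_-) = \{B_1^-,\ldots,B_k^-\}$ has $\{p_{2i-1},p_{2i}\}\subseteq B_i^-$, and non-chain membership in $B_j^-$ corresponds to $-$-sandwiching at position $2j-1$.

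For the reverse inclusion pick any $\ell'\in \relint \NC_{\P_+}(F_+)\cap \relint(-\NC_{\P_-}(F_-))$ and read off the signs of $\ell'_p$ via Corollary~\ref{cor:maxmin} applied to $F_+$ and, with $-\ell'$, to $F_-$. At each interior chain element $p_{2i}$ one has $p_{2i}\in \min(B_i^+)\cap \min(B_i^-)$, forcing $\ell'_{p_{2i}}>0$; dually $\ell'_{p_{2i-1}}<0$ at each $p_{2i-1}$. At a non-chain element $q$, if $q$ were to lie in both some $B_i^+$ and some $B_j^-$, then by the sandwich characterization $q$ would witness a $+$-sandwich at even chain position $2i$ and a $-$-sandwich at odd position $2j-1$, a crossing of $C$ in the sense of Definition~\ref{def:crossedcycle} that contradicts uncrossedness. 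Hence $q$ is absent from at least one of $\bigcup_i B_i^+$ or $\bigcup_j B_j^-$, and Corollary~\ref{cor:maxmin}(iii) forces $\ell'_q=0$. Thus $\ell'$ is supported on $\{p_1,\ldots,p_{2k}\}$, its relative-interior expansion in $\NC_{\P_+}(F_+)$ can use only the generators $\ell_{p_{2i},p_{2i+1}}$ for $i=0,\ldots,k$, giving $\ell' = \sum_i \alpha_i\ell_{p_{2i},p_{2i+1}}$ with $\alpha_i>0$; symmetrically $-\ell'=\sum_i\beta_i \ell_{p_{2i-1},p_{2i}}$ with $\beta_i>0$. Matching the coefficient of $f(p_j)$ across both expansions for each $j\in\{1,\ldots,2k\}$ collapses everything to $\alpha_0=\cdots=\alpha_k=\beta_1=\cdots=\beta_k=:\mu>0$, hence $\ell' = \mu\cdot \ell_C$.

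The main obstacle is the sandwich characterization of block membership used in the second paragraph: one has to verify that a non-chain element only enters the reduced partition $\FaPa(F_+)$ through a direct relation $p_{2i}\preceq_+ q \preceq_+ p_{2i+1}$, ruling out the possibility of being dragged in through longer transitive webs of tight equalities and $\preceq_+$-relations, or by an incidental merging of two chain blocks into one. Once this description is in hand, the uncrossedness hypothesis is exactly what prevents a conflict between the two partitions, and matching generator coefficients then finishes the argument.
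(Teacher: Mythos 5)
Your proposal follows essentially the same route as the paper's proof: take $F_\pm$ to be the faces defined by $\pm\ell_C$, identify the reduced face partitions with the intervals $[p_{2i},p_{2i+1}]_{\P_+}$ and $[p_{2i-1},p_{2i}]_{\P_-}$, use uncrossedness together with Corollary~\ref{cor:maxmin} to force $\ell'_q=0$ on non-chain elements and fixed signs on the chain elements, and match coefficients to conclude $\ell'=\mu\ell_C$. The ``sandwich characterization'' you flag as the main obstacle is exactly what the paper establishes by noting that $\1_{\Filter}\in F_+$ precisely when the filter $\Filter$ separates no pair $\{p_{2j},p_{2j+1}\}$ (and dually for $F_-$), with uncrossedness preventing blocks from merging; the only slip to correct is that $p_{2i}$ is a \emph{maximal} element of $B_i^-$, so the sign $\ell'_{p_{2i}}>0$ comes from applying the corollary to $-\ell'\in\relint \NC_{\P_-}(F_-)$.
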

\begin{proof}
We only consider $C$ to be an alternating chain of the form
$$\Pbot=p_0 \prec_+  p_1 \prec_- p_2 \prec_+ \cdots \prec_-  p_{2k} \prec_+  p_{2k+1} = \Ptop,$$
since the proof works analogously for the other forms of alternating chains and cycles. Then the linear function is 
$$\ell_{C}(f) = -f(p_1) + f(p_2) - \dots + f(p_{2k}).$$ Let $F_+ = \Ord{\P_+}^{\ell_C}$ and $F_- = \Ord{\P_-}^{-\ell_C}$ be the corresponding faces. If $\Filter$ is a filter of $\P_+$, then $p_{2i} \in \Filter$ implies $p_{2i+1} \in \Filter$ for $1\leq i \leq k$, since $p_{2i} \prec_+ p_{2i+1}$. It follows from $\sign(C)=+$ with Lemma ~\ref{lem:maxvalchain}(i) that $\max_{\Filter \in P_+} \ell_{C}(\1_\Filter) = 1$.  Thus $\1_\Filter \in F_+$ if and only if $\Filter$ does not separate $p_{2j}$ and $p_{2j+1}$ for $1 \le j \le k$, because otherwise $\ell_C(\1_\Filter) < 1$. From Definition~\ref{def:facepartition} it follows that $p_{2j}$ and $p_{2j+1}$ for $1 \le j \le k$ are not contained in different parts of the face partition $\FaPa_+$.\newline
If $\Filter$ is a filter of $\P_-$, then $p_{2i-1} \in \Filter$ implies $p_{2i} \in \Filter$ for $1\leq i \leq k$, since $p_{2i-1} \prec_- p_{2i}$. It follows again with Lemma ~\ref{lem:maxvalchain}(i) that $\min_{\Filter \in P_-}\ell_C(\1_\Filter) = 0$. Thus a filter $\Filter \subseteq \P_-$ is contained in $F_-$ if and only if $\Filter$ does not separate $p_{2j-1}$ and $p_{2j}$ for $1 \le j \le k$, otherwise $\ell_C(\1_\Filter) > 0$. Again from Definition~\ref{def:facepartition} it follows that $p_{2j-1}$ and $p_{2j}$ for $1 \le j \le k$ are not contained in different parts of the face partition $\FaPa_-$.\newline
Since $C$ is an  uncrossed alternating chain, there is no $a \in \P$ and $i \neq j$ such that $p_{2i} \preceq_+ a \prec_+ p_{2i+1}$ and $p_{2j} \preceq_+ a \prec_+ p_{2j+1}$ and hence there is $f \in F_+$ such that $f(p_{2i})\neq f(p_{2j})$. As well, there is $g \in F_-$ such that $g(p_{2i-1})\neq g(p_{2j-1})$ for any $1 \leq i \le j \leq k$. Thus, the reduced face partitions $\FaPa_\pm$ are \begin{align*}
        \FaPa_+ &\ = \
        \{[p_0,p_1]_{\P_+}, [p_2,p_3]_{\P_+}, \dots, [p_{2k},
        p_{2k+1}]_{\P_+}\}\text{ and}\\
        \FaPa_- &\ = \ \{[p_1,p_2]_{\P_-}, [p_3,p_4]_{\P_-},
        \dots, [p_{2k-1},p_{2k}]_{\P_-} \}.
    \end{align*}
Let $\ell$ be a linear function with $\ell(\phi) = \sum_{p \in \P} \ell_p
\phi(p)$ such that $F_+ = \Ord{\P_+}^{\ell}$ and $F_- = \Ord{\P_-}^{-\ell}$.
Since for $1 \leq i \leq k$ the element $p_{2i}$ is a minimal and $p_{2i-1}$
is a maximal element of $\FaPa_+$, it follows from Corollary~\ref{cor:maxmin}
that $\ell_{p} > 0$ if $p = p_{2i-1}$ and $\ell_{p} < 0$ if $p = p_{2i}$ for
$1 \leq i \leq k$. Since $C$ is an uncrossed alternating chain, it follows
that if $a \in(p_i,p_{i+1})_{\P_+}$ for some $i$, then $a \notin
\left[p_j,p_{j+1}\right]_{\P_-}$ for all $j$ and vice versa. Otherwise there
would be $p_j,p_{j+1}$ such that $p_i \prec_+ a \prec_+ p_{i+1}$ and $p_j
\preceq_- a \preceq_- p_{j+1}$. That is why $a \notin \bigcup_i B_i$ for one
of the face partitions $\FaPa_+$ or $\FaPa_-$ and hence it follows from
Corollary~\ref{cor:maxmin}(iii) that $\ell_{a} = 0$. Since we assumed $F_+ =
\Ord{\P_+}^{\ell}$ and $F_- = \Ord{\P_-}^{-\ell}$, it follows that $\ell
\in \NC_{\P_{+}}(F_{+})$ and $-\ell \in \NC_{\P_{-}}(F_{-})$. As Equation 
~\ref{eqn:normalcone} states we can write
\begin{align*} 
    \NC_{\P_{+}}(F_{+}) & =\cone\left\{ \ell_{p_{0},p_{1}}, \ell_{p_{2},p_{3}},
\dots , \ell_{p_{2k},p_{2k+1}}\right\} \ \text{ and } \\
\NC_{\P_{-}}(F_{-}) & = \cone\left\{ \ell_{p_{1},p_{2}}, \ell_{p_{3},p_{4}},
\dots , \ell_{p_{2k-1},p_{2k}}\right\}.
\end{align*}
So $\ell \in \relint \NC_{\P_+}(F_+) \cap \text{relint} -\NC_{\P_-}(F_-)$ 
satisfies  $\ell_{p_i} + \ell_{p_{i+1}} = 0$ for all $1 \le i \le 2k$ 
and therefore $\ell = \mu \ell_C$ for some $\mu > 0 $.
\end{proof}
\begin{figure} 
  \centering
	\begin{minipage}{.49\textwidth}
	\centering
    \includegraphics[width=0.9\textwidth]{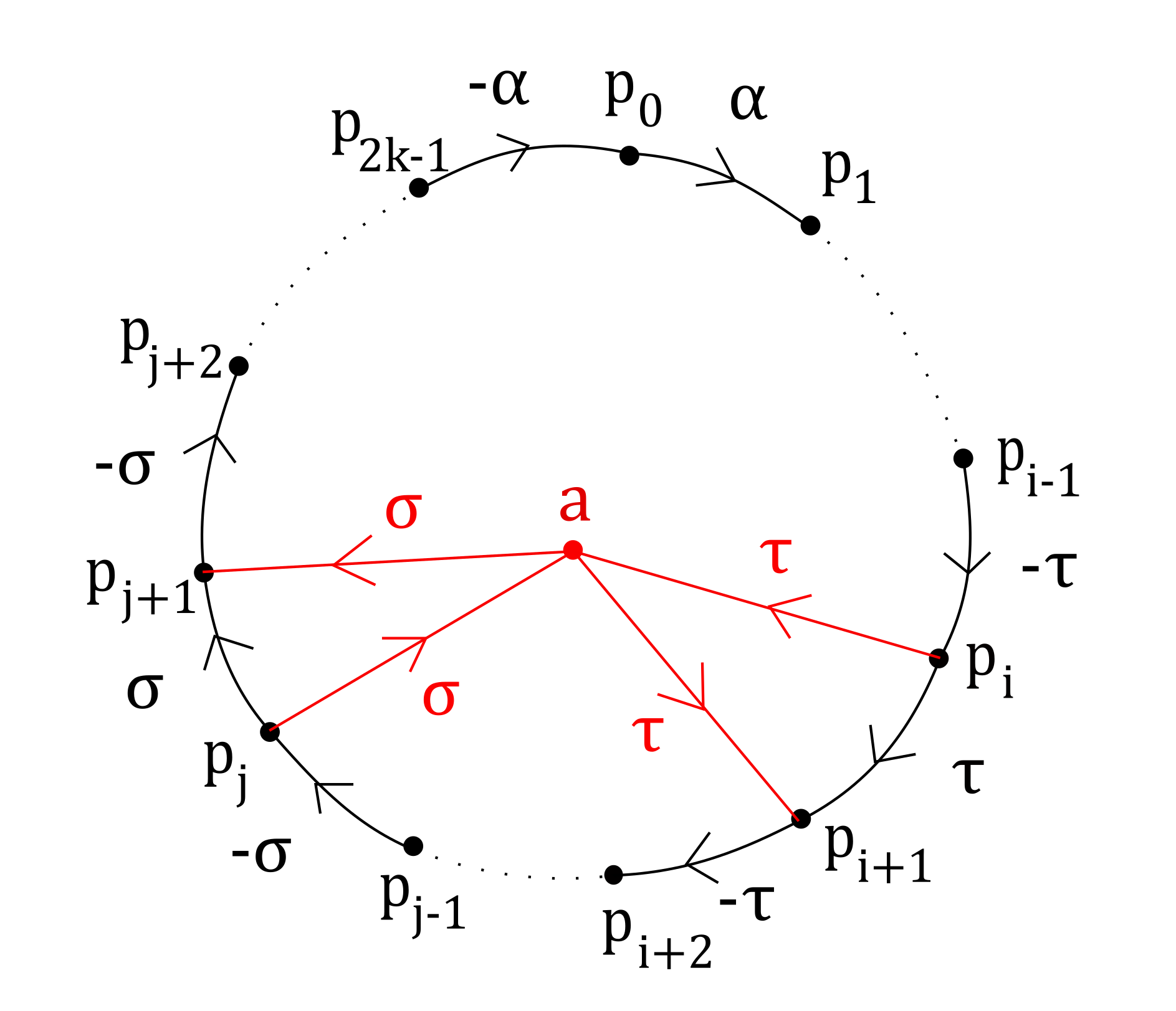}
		\caption{An alternating cycle crossed by a.}
		\label{fig:definitioncycle1}
		\end{minipage}
\end{figure}
The following decomposition of crossed alternating chains and cycles will be important. 
\begin{prop}\label{prop:split}
Let $\dP$ be a double poset. 
\begin{enumerate}[(i)]
\item If $C$ is an alternating cycle crossed by $a$, then there are two alternating cycles $C_1$ and $C_2$ such that $\ell_{C} = \ell_{C_1} + \ell_{C_2}$. 
\item If $C$ is an alternating chain crossed by $a$, then there is a proper alternating chain $C_1$ and an alternating cycle $C_2$ such that $\ell_{C} = \ell_{C_1} + \ell_{C_2}$ and $\sign(C) = \sign(C_1)$. 
\end{enumerate}
\end{prop}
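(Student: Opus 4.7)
My approach is to split $C$ at the crossing element $a$, treating two cases depending on whether the two crossed edges of $C$ share the same sign. Write the crossings as $p_i\preceq_{\tau_1}a\prec_{\tau_1}p_{i+1}$ and $p_j\preceq_{\tau_2}a\prec_{\tau_2}p_{j+1}$ with $i<j$. Because consecutive signs along $C$ alternate, the equality $\tau_1=\tau_2$ is equivalent to $j-i$ being even, and these two cases call for different constructions.

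When $\tau_1\ne\tau_2$, I keep $a$ as a vertex of both pieces. Inserting a copy of $a$ at each crossed edge of $C$ and cutting the resulting walk at the two copies of $a$ yields, in the cycle case, the two cycles
\[
a\prec_{\tau_1}p_{i+1}\prec\cdots\prec p_j\prec_{\tau_2}a\qquad\text{and}\qquad a\prec_{\tau_2}p_{j+1}\prec\cdots\prec p_i\prec_{\tau_1}a.
\]
In the chain case the second of these two pieces instead opens up into the chain $\Pbot=p_0\prec\cdots\prec p_i\prec_{\tau_1}a\prec_{\tau_2}p_{j+1}\prec\cdots\prec p_k=\Ptop$, which I take as $C_1$, with the first cycle as $C_2$. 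Alternation inside each piece is inherited from $C$, and at every occurrence of $a$ the two incident signs differ because $\tau_1\ne\tau_2$. In $\ell_{C_1}+\ell_{C_2}$ the two copies of $a$ contribute with opposite signs, so $f(a)$ cancels, and the remaining terms reassemble $\ell_C$.

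When $\tau_1=\tau_2=\tau$, the same walk-splitting construction would give pieces of odd length, so $a$ must instead be eliminated via transitivity. The relations $p_i\prec_\tau a\prec_\tau p_{j+1}$ and $p_j\prec_\tau a\prec_\tau p_{i+1}$ yield the shortcut relations $p_i\prec_\tau p_{j+1}$ and $p_j\prec_\tau p_{i+1}$, which let me reroute around $a$. For cycles take
\[
C_1\colon p_i\prec_\tau p_{j+1}\prec_{-\tau}\cdots\prec p_i,\qquad C_2\colon p_{i+1}\prec_{-\tau}\cdots\prec p_j\prec_\tau p_{i+1}.
\]
For chains take $C_1$ to be the chain $\Pbot\prec\cdots\prec p_i\prec_\tau p_{j+1}\prec\cdots\prec\Ptop$ and let $C_2$ be the cycle above on the right. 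That $j-i$ is even is precisely what is needed for the alternating sign pattern of $C_1$ and $C_2$, including the wrap-around at each cycle. The identity $\ell_{C_1}+\ell_{C_2}=\ell_C$ reduces to the algebraic observation $\ell_{p_i,p_{i+1}}+\ell_{p_j,p_{j+1}}=\ell_{p_i,p_{j+1}}+\ell_{p_j,p_{i+1}}$ after expanding $\ell_C$ as $\tau\sum\ell_{p_m,p_{m+1}}$ over the $\tau$-sign edges of $C$.

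In both constructions the last edge of $C_1$ inherits the sign of the last edge of $C$, giving $\sign(C_1)=\sign(C)$ for part~(ii), and $C_1$ is proper because it contains at least the nontrivial relation introduced at the split. The step I expect to be most delicate is a case analysis at the endpoints: verifying distinctness of the vertices of $C_1,C_2$ when $a$ coincides with some $p_m$, handling the boundary cases $i=0$ or $j=k-1$ for chains, and re-checking the auxiliary odd-length condition $p_{k-1}\not\prec_\sigma p_1$ for the chain $C_1$. Each of these reduces, with some bookkeeping, to the corresponding property of $C$ together with the insertion or shortcut relations just introduced.
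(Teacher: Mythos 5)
Your decomposition is essentially the paper's: both proofs split $C$ at the crossing element, using the transitivity shortcuts $p_i \prec_\tau p_{j+1}$ and $p_j \prec_\tau p_{i+1}$ when the two crossed relations carry the same sign, and inserting $a$ as a vertex of the two pieces when the signs differ, with the same cancellation of $f(a)$ (resp.\ the same exchange identity on the $\ell_{p_m,p_{m+1}}$) giving $\ell_C = \ell_{C_1}+\ell_{C_2}$. The endpoint bookkeeping you defer is exactly the case the paper treats separately (for $p_i = a$ it contracts $p_{i-1} \prec_{-\tau} a \prec_{-\tau} p_{j+1}$ to $p_{i-1} \prec_{-\tau} p_{j+1}$ rather than keeping $a$ in that piece, since otherwise alternation at $a$ fails), so nothing essential is missing.
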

\begin{proof}
(i) Let $C$ be a crossed alternating cycle and $i<j$: 
$$p_{0} \prec_+ \cdots \prec_{-\tau} p_{i} \prec_\tau p_{i+1} \prec_{-\tau} \cdots \prec_{-\sigma} p_j \prec_{\sigma} p_{j+1} \prec_{-\sigma} \cdots \prec_{-} p_{2k}=p_{0}.$$ 
\begin{enumerate}[\rm (1)]
	\item If $\tau = \sigma$, then \newline $p_{0} \prec_+ \cdots \prec_{-\tau} p_{i} \prec_\tau p_{j+1} \prec_{-\tau} \cdots \prec_{-} p_{2k}=p_{0}$ and \newline $p_{i+1} \prec_{-\tau} p_{i+2} \prec_\tau \cdots \prec_{-\tau} p_{j} \prec_\tau p_{i+1}$ are the two alternating cycles $C_1$ and $C_2$.
		\item If $\tau = -\sigma$, then $C_1$ is given by \newline 
		$p_{0} \prec_+ \cdots \prec_{-\tau} p_{i} \prec_\tau a \prec_{-\tau} p_{j+1} \prec_{-\tau} \cdots \prec_{+} p_{2k}=p_{0}$ in case $p_i \neq a$; or \newline
		$p_{0} \prec_+ \cdots \prec_{\tau} p_{i-1}  \prec_{-\tau} p_{j+1} \prec_{\tau} \cdots \prec_{-} p_{2k}=p_{0}$ in case $p_i = a$, and $C_2$ is given by\newline
		$p_i \prec_\tau p_{i+1} \prec_{-\tau} p_{i+2} \prec_\tau \cdots \prec_{\tau} p_{j} \prec_{-\tau} p_i$.
\end{enumerate}
(ii) We only consider the case where $C$ is a crossed alternating chain starting with $+$ and $i<j$: 
$$\hat{0}=p_{0} \prec_+ \cdots \prec_{-\tau} p_{i} \prec_\tau p_{i+1} \prec_{-\tau} \cdots \prec_{-\sigma} p_j \prec_{\sigma} p_{j+1} \prec_{-\sigma} \cdots \prec_{\pm} p_{k}=\hat{1}.$$
\begin{enumerate}[\rm (1)]
\setcounter{enumi}{2}
\item If $\tau = \sigma$, then\newline
			$\hat{0} = p_{0} \prec_+ \cdots \prec_{-\tau} p_i \prec_{\tau} p_{j+1} \prec_{-\tau} \cdots \prec_{\pm} p_k = \hat{1}$ 			is the alternating chain $C_1$ and \newline
			$p_{i+1} \prec_{-\tau} p_{i+2} \prec_{\tau} \cdots \prec_{-\tau} p_{j} \prec_{\tau} p_{i+1}$ 
			is the alternating cycle $C_2$.
\item If $\tau = -\sigma$, then \newline
			$\hat{0} = p_{0} \prec_\alpha \cdots \prec_{-\tau} p_i \prec_{\tau} a \prec_{-\tau} p_{j+1} 
			\prec_{\tau} \cdots \prec_{\pm \alpha} p_k = \hat{1}$ is the alternating chain $C_1$ in case $p_i \neq a$; \newline
			$\hat{0} = p_{0} \prec_\alpha \cdots \prec_{\tau} p_{i-1} \prec_{-\tau} p_{j+1} \prec_{\tau} \cdots \prec_{\pm \alpha} 			p_k = \hat{1}$ is the alternating chain $C_1$ in case $p_i = a$, and \newline 
			$a \prec_\tau p_{i+1} \prec_{-\tau} \cdots \prec_{\tau} p_{j} \prec_{-\tau} a$ 
			is the alternating cycle $C_2$ in both cases.
\end{enumerate} \end{proof}

\begin{figure}
  \centering
  \begin{minipage}{.49\textwidth}
  \centering
    \includegraphics[width=1\textwidth]{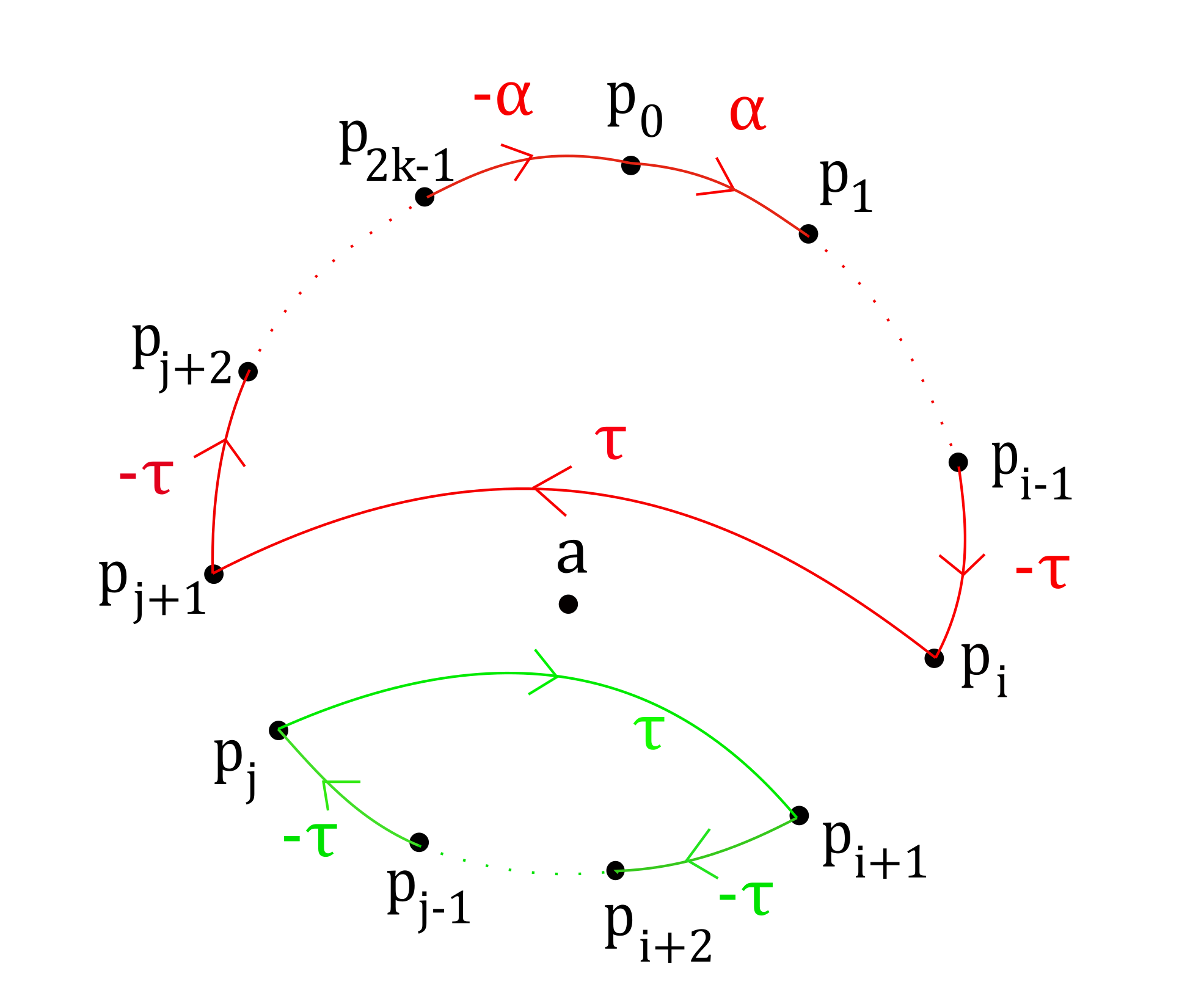}
    \caption{An alternating cycle crossed by a such that $\tau=\sigma$, and two alternating cycles $C_{1}$ (red) and $C_2$ (green). Those satisfy $\ell_{C} = \ell_{C_1} + \ell_{C_2}$. }
		\label{cycle1++}
    \end{minipage}
      \begin{minipage}{.49\textwidth}
  \centering
    \includegraphics[width=1\textwidth]{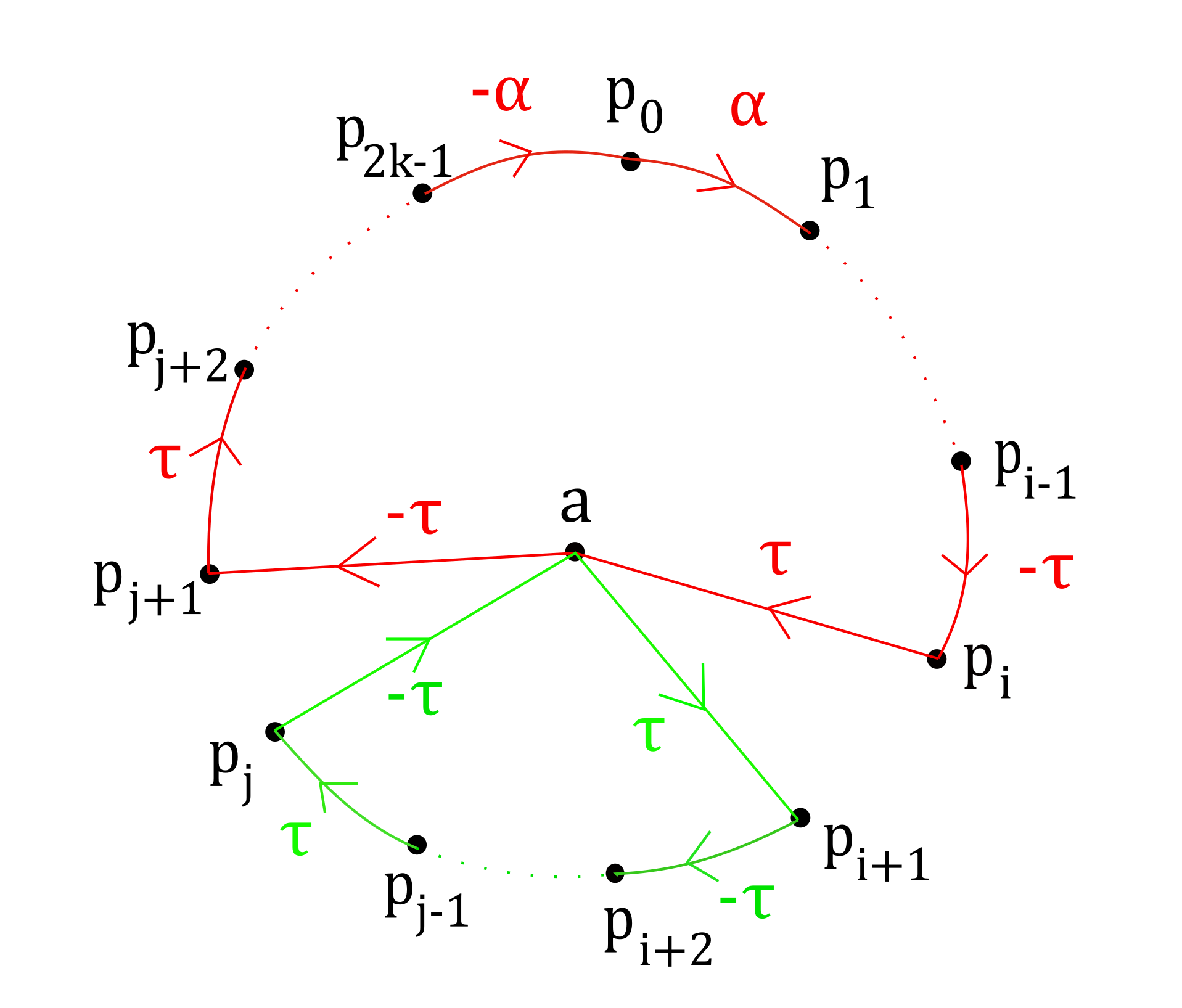}
		\caption{An alternating cycle crossed by a such that $\tau=-\sigma$, and two alternating cycles $C_{1}$ (red) and $C_2$ (green). Those satisfy $\ell_{C} = \ell_{C_1} + \ell_{C_2}$.}
		\label{cycle1+-}
		\end{minipage}
	\end{figure}
	
\begin{cor}\label{cor:crossed}
Let $\dP = (\P,\preceq_+, \preceq_-)$ be a double poset and $C$ an alternating cycle or chain. If there is an $a \in \P$ such that $C$ is crossed by $a$, then $\ell_{C}$ is not rigid.
\end{cor}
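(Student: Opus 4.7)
The plan is a contradiction argument. Assume $\ell_C$ is rigid. Since $\ell_C$ automatically lies in $\relint \NC_{\P_+}(\Ord{\P_+}^{\ell_C})$ and in $\relint (-\NC_{\P_-}(\Ord{\P_-}^{-\ell_C}))$, the pair of faces in Definition~\ref{def:rigid} is forced to be $F_\pm = \Ord{\P_\pm}^{\pm\ell_C}$. Using Proposition~\ref{prop:split} I write $\ell_C = \ell_{C_1} + \ell_{C_2}$, where each $C_i$ is a proper alternating chain or cycle of strictly smaller combinatorial size than $C$. The strategy is to show that both $\ell_{C_1}$ and $\ell_{C_2}$ also belong to $\NC_{\P_+}(F_+) \cap (-\NC_{\P_-}(F_-))$, and then derive a contradiction from the fact that at least one of them cannot be a nonnegative multiple of $\ell_C$.

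For the containment I invoke Lemmas~\ref{lem:maxvalchain} and~\ref{lem:maxvalcycle}. In both cases these yield the additive identities $\max_{\Ord{\P_+}} \ell_C = \max_{\Ord{\P_+}} \ell_{C_1} + \max_{\Ord{\P_+}} \ell_{C_2}$ and $\min_{\Ord{\P_-}} \ell_C = \min_{\Ord{\P_-}} \ell_{C_1} + \min_{\Ord{\P_-}} \ell_{C_2}$; in the chain case this uses $\sign(C_1) = \sign(C)$ from Proposition~\ref{prop:split}(ii) and amounts to $1 = 1+0$ and $0 = 0 + 0$ (if $\sign(C)=+$, symmetric otherwise), while in the cycle case every value is $0$. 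Combining this with the pointwise bounds $\ell_{C_i}(f) \le \max_{\Ord{\P_+}} \ell_{C_i}$ for $f \in F_+$, equality in $\ell_C(f) = \max\ell_C$ can only hold when each summand is simultaneously maximised, giving $\ell_{C_i} \in \NC_{\P_+}(F_+)$; the argument on $F_-$ is symmetric. Now because $\ell_C$ lies in both relative interiors, the standard identity $\relint A \cap \relint B = \relint(A\cap B)$ together with rigidity yields $\NC_{\P_+}(F_+) \cap (-\NC_{\P_-}(F_-)) = \R_{\ge 0}\ell_C$, so $\ell_{C_i} = \alpha_i \ell_C$ with $\alpha_i \ge 0$ and $\alpha_1 + \alpha_2 = 1$.

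The contradiction now splits by case. In the chain case, Lemma~\ref{lem:maxvalchain} combined with $\sign(C_1) = \sign(C)$ gives $\max_{\Ord{\P_+}} \ell_{C_1}= \max_{\Ord{\P_+}}\ell_C \neq 0$ (if $\sign=+$; otherwise use the minima), forcing $\alpha_1 = 1$ and $\ell_{C_2}\equiv 0$, but $C_2$ is a proper alternating cycle and therefore $\ell_{C_2}\neq 0$. In the cycle case both maxima vanish, so I compare supports instead: after cyclically shifting $C$ so that the shorter of the two arcs carved out by the crossing element is the one producing $C_2$, the vertex set of $C_2$ is a strict subset of the vertex set of $C$; since $\ell_C$ has nonzero coefficient at every vertex of $C$ while $\ell_{C_2}$ vanishes off the vertices of $C_2$, the identity $\ell_{C_2} = \alpha_2 \ell_C$ forces $\alpha_2 = 0$, giving the same contradiction. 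The main obstacle is this last step in the cycle case: the support comparison has to be checked against every configuration in Proposition~\ref{prop:split}(i), in particular the $\tau = -\sigma$ case where $a$ itself appears as a vertex of $C_2$, and one must carefully pick the decomposition so that at least one vertex of $C$ is absent from $C_2$.
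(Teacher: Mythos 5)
Your argument is correct and is essentially the paper's own proof: both rest on the decomposition $\ell_C = \ell_{C_1}+\ell_{C_2}$ from Proposition~\ref{prop:split} together with Lemmas~\ref{lem:maxvalchain} and~\ref{lem:maxvalcycle}, which show that any point maximizing $\ell_C$ over $\DOrd{\dP}$ must simultaneously maximize both summands. The only difference is one of bookkeeping: you run the final contradiction in the normal-cone picture (forcing $\ell_{C_i}=\alpha_i\ell_C$ and then $\ell_{C_2}=0$ via the sign or support comparison), while the paper runs the dual face-level argument that the putative facet would be contained in two distinct proper faces $G\neq H$.
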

\begin{proof}
Assume that $F = \DOrd\dP^{\ell_{C}}$ is a facet. It follows from Proposition~\ref{prop:split}, that there are proper alternating chains or cycles $C_1$ and $C_2$ such that $\ell_{C}= \ell_{C_1} + \ell_{C_2}$ and one of the following holds:
\begin{enumerate}[\rm (i)]
\item $C$, $C_1$ and $C_2$ are alternating cycles;
\item $C$ and $C_1$ are alternating chains that satisfy $\sign(C)=\sign(C_1)$, $C_2$ is an alternating cycle.
\end{enumerate}
Let $G = \DOrd\dP^{\ell_{C_{1}}}$ and $H = \DOrd\dP^{\ell_{C_{2}}}$ be the faces defined by $\ell_{C_1}$ and $\ell_{C_2}$. Let $f \in \text{relint } F$. \newline
In case of (i), since $\ell_{C}(f) = 0$ from Lemma~\ref{lem:maxvalcycle}, this implies $\ell_{C_1}(f) = \ell_{C_2}(f) = 0$.\newline 
In case of (ii), since $\ell_{C}(f) = 1$ from Lemma~\ref{lem:maxvalchain}, this implies $\ell_{C_1}(f) = 1$ and $\ell_{C_2}(f) = 0$. Thus $f \in G \cap H$. Since $f$ was in $\text{relint } F$, it follows that $F \subseteq G \cap H$.
 The alternating chains or cycle $C_1$ and $C_2$ have a length $k>1$ and hence $\ell_{C_{i}}\neq 0$ for $i=1,2$.
Thus $G, \, H$ are proper faces and since we have assumed that $F$ is a facet, it follows that $G$ and $H$ are facets. Since $C_1$ and $C_2$ differ by at least one element it follows, that $\ell_{C_1} \neq \mu \ell_{C_2}$ for all $\mu \in \R_{>0}$ and hence $G \neq H$. Thus $F$ cannot be a facet and hence $\ell_{C}$ is not rigid.
\end{proof}
The following theorem completes the characterization of the facets of double order polytopes and follows from Proposition~\ref{prop:uncrossed} and Corollary~\ref{cor:crossed}.
\begin{thm}\label{thm:facets}
    Let $\dP = ( \P, \preceq_+, \preceq_-)$ be a double poset. A linear function $\ell$ is rigid if
    and only if $\ell \in \R_{>0} \ell_C$ for some uncrossed alternating chain or cycle $C$. In
    particular, the facets of $\TOrd{\dP}$ are in bijection to alternating
    chains and cycles that are not crossed by any $a \in \P$.
\end{thm}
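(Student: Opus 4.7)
The plan is to assemble the theorem from the two main ingredients already established, namely Proposition~\ref{prop:uncrossed} and Corollary~\ref{cor:crossed}, together with the unnumbered Proposition of Chappell--Friedl--Sanyal stating that every rigid $\ell$ is a positive multiple of $\ell_C$ for some alternating chain or cycle $C$.

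First I would prove the equivalence of rigidity with being a positive multiple of $\ell_C$ for an uncrossed alternating chain or cycle. For the reverse implication, let $C$ be an uncrossed alternating chain or cycle and $\mu>0$. Then $\ell = \mu \ell_C$ is rigid by Proposition~\ref{prop:uncrossed} (applied to $C$), since rigidity is clearly preserved under scaling by a positive real. For the forward implication, suppose $\ell$ is rigid. By the cited proposition we may write $\ell = \mu \ell_C$ for some $\mu > 0$ and some alternating chain or cycle $C$. If $C$ were crossed by some $a \in \P$, Corollary~\ref{cor:crossed} would tell us that $\ell_C$, and hence $\ell$, is not rigid, a contradiction. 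Therefore $C$ is uncrossed, establishing the equivalence.

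Next I would turn to the bijection statement. By Definition~\ref{def:rigid} every rigid $\ell$ defines a vertical facet $F = F_+ - F_-$ of $\DOrd{\dP}$ (equivalently a vertical facet of $\TOrd{\dP}$), and, conversely, every vertical facet arises from some linear function $\ell$ that is determined up to positive scaling and lies in the intersection of the two relative interiors appearing in \eqref{eqn:rigid}. Hence the facet map $C \mapsto \DOrd{\dP}^{\ell_C}$ is well defined on uncrossed alternating chains and cycles and hits every vertical facet. To see that this map is injective I would use the reduced face partitions computed in the proof of Proposition~\ref{prop:uncrossed}: the two partitions $\FaPa_+$ and $\FaPa_-$ of the corresponding facet recover exactly the sequence of elements of $C$ together with its alternating $\pm$-pattern; since the first linear form in $C$ has sign $\sigma$ determined by whether $p_0 = \Pbot$ and by $\sign(C)$, the chain or cycle $C$ is reconstructed from $F$ up to the cyclic symmetry that we already quotiented out in Definition~\ref{def:altcycle}. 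Finally the two ``horizontal'' facets $2\Ord{\P_+}\times\{1\}$ and $-2\Ord{\P_-}\times\{-1\}$ account for the complement of the vertical facets.

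The main obstacle I anticipate is keeping the bookkeeping of the bijection clean, in particular checking that the cyclic-shift identification of alternating cycles is exactly the right quotient so that distinct equivalence classes of uncrossed cycles correspond to distinct facets; the chain case is more straightforward because the endpoints $\Pbot,\Ptop$ break any symmetry, except for the minor point that the odd-length condition $p_{k-1}\not\prec_\sigma p_1$ in Definition~\ref{def:altchain} is what prevents a cycle from being confused with a chain and so must be invoked here to ensure injectivity.
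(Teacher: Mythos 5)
Your proposal is correct and takes essentially the same route as the paper, which derives Theorem~\ref{thm:facets} by exactly this assembly of Proposition~\ref{prop:uncrossed}, Corollary~\ref{cor:crossed}, and the unnumbered proposition that every rigid functional is a positive multiple of some $\ell_C$. The only difference is that you spell out the injectivity of $C \mapsto \DOrd{\dP}^{\ell_C}$ via the reduced face partitions, a point the paper leaves implicit.
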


We now turn to the question which incompatible double order polytopes are $2$-level. 
\begin{definition}
A full-dimensional polytope $Q \subseteq\mathbb{R}^{n}$ is \textbf{$2$-level}, if  for every facet-defining hyperplane $H$ there is some $t \in \mathbb{R}^{n}$ such that $H \cup (t + H)$ contains all vertices of $Q$.
\end{definition}
$2$-level polytopes and compressed polytopes~\cite{Sullivant} constitute a very interesting class of polytopes in combinatorics and optimization. In particular Stanleys order polytopes are 2-level and in ~\cite{twodoubleposet}, Chappell, Friedl and Sanyal classified the $2$-level polytopes among compatible double order polytopes. To include the incompatible double order polytopes we need to determine the facet-defining inequalities of
$\TOrd{\dP}$.
\begin{cor}\label{cor:ineq}
Let $\dP = (\P, \preceq_+, \preceq_-)$ be a double poset. Then $\TOrd{\dP}$ is the set of points $(f,t) \in \mathbb{R}^{\P}
\times \mathbb{R}$ such that 
\begin{enumerate}[\rm (i)]
\item $\text{L}_{C} (f,t) := \ell_{C} (f) - \sign(C) \, t \leq 1$ for all uncrossed alternating chains  $C$ of $\P$; 
\item $\text{L}_{C} (f,t) := \ell_{C} (f) \leq  0$ for all uncrossed alternating cycles of $\P$.
\end{enumerate}
\end{cor}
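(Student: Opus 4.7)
The plan is to prove the set equality by two inclusions. For the forward direction---that every $(f,t) \in \TOrd{\dP}$ satisfies both (i) and (ii)---I will use linearity of each $\text{L}_C$ together with the fact that $\TOrd{\dP}$ is the convex hull of $(2\Ord{\P_+}) \times \{1\}$ and $(-2\Ord{\P_-}) \times \{-1\}$ to reduce the problem to checking the bounds on these two generating sets. For an uncrossed proper alternating chain $C$ with $\sign(C) = +$, Lemma~\ref{lem:maxvalchain}(i) then immediately gives $\text{L}_C(2g,1) = 2\ell_C(g) - 1 \leq 1$ on the top and $\text{L}_C(-2h,-1) = -2\ell_C(h) + 1 \leq 1$ on the bottom; the case $\sign(C) = -$ is symmetric via Lemma~\ref{lem:maxvalchain}(ii), and alternating cycles are handled analogously using Lemma~\ref{lem:maxvalcycle}, whose proof in fact establishes $\max_{\Ord{\P_+}} \ell_C = \min_{\Ord{\P_-}} \ell_C = 0$.

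For the reverse inclusion I will appeal to Theorem~\ref{thm:facets}. The polytope $\TOrd{\dP}$ is full-dimensional in $\R^{\P} \times \R$ (since $\Ord{\P_+}$ and $\Ord{\P_-}$ are full-dimensional in $\R^\P$), so it coincides with the intersection of its facet-defining halfspaces. It therefore suffices to exhibit each facet-defining halfspace as one of the inequalities listed in (i) or (ii). The two horizontal facets come from the trivial alternating chains $\Pbot \prec_\pm \Ptop$ of length $k = 1$, which have $\ell_C \equiv 0$ and are vacuously uncrossed (only the index $i = 0$ exists); they contribute the inequalities $t \leq 1$ and $-t \leq 1$. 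By Theorem~\ref{thm:facets} the remaining vertical facets are in bijection with uncrossed proper alternating chains and cycles, and the computation from the first paragraph shows that each such facet is cut out precisely by $\text{L}_C = 1$ for a chain or $\text{L}_C = 0$ for a cycle, with the normalisation being tight because equality is attained on at least one vertex of each of the two halves.

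The main delicate point---essentially the only step beyond routine---is matching the combinatorial data of Theorem~\ref{thm:facets} (which concerns rigidity for $\DOrd{\dP}$) with the correct facet-defining linear functional on the larger polytope $\TOrd{\dP}$. Concretely, one has to identify the scalar $\alpha$ for which $L(f,t) := \ell_C(f) + \alpha t$ attains a common maximum on both the top and the bottom copy; balancing the extremal values supplied by Lemmas~\ref{lem:maxvalchain} and~\ref{lem:maxvalcycle} forces $\alpha = -\sign(C)$ for proper chains and $\alpha = 0$ for cycles, which is exactly the sign convention appearing in (i) and (ii) and which allows the trivial length-$1$ chains to slot naturally into the same framework as the horizontal facets.
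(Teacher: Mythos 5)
Your proof is correct and follows essentially the same route as the paper: identify the vertical facets with uncrossed alternating chains and cycles via Theorem~\ref{thm:facets}, and use Lemmas~\ref{lem:maxvalchain} and~\ref{lem:maxvalcycle} to pin down the coefficient of $t$ and the right-hand side of each facet inequality. You are in fact somewhat more explicit than the paper's own short argument, which skips the validity check on the two generating sets and does not account for the horizontal facets (your observation that they arise from the improper length-one chains $\Pbot \prec_\pm \Ptop$ with $\ell_C \equiv 0$ is a welcome addition).
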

\begin{proof}
Theorem~\ref{thm:facets} says that the facet-defining inequalities of $\TOrd{\dP}$ are in bijection to the uncrossed alternating chains and cycles of $\dP$. If $C$ is an alternating cycle and $\sign{C}=+$, then it follows by Lemma~\ref{lem:maxvalchain} that the maximal value of $\ell_{C}$ over $2 \Ord{\P_+}$ is $2$ and $0$ over $-2 \Ord{\P_-}$. Since the values are exchangend for $\sign{C} = -$, the facet-defining inequalities are of the form $(i)$. If $C$ is an alternating cycle, then it follows by Lemma~\ref{lem:maxvalcycle} that maximal value of $\ell_{C}$ over $2 \Ord{\P_+}$ as well as over $-2 \Ord{\P_-}$ is $0$ and hence the facet-defining inequalities are of the form $(i)$.
\end{proof} 
\begin{prop}\label{prop:level1}
Let $\dP= (\P, \preceq_+, \preceq_-)$ be a double poset and $\sigma \in \left\{\pm\right\}$. If there are $a, b \in \P$ such that $\, \Pbot \prec_{-\sigma} a \prec_{\sigma} b \prec_{-\sigma} \Ptop \, $ is an uncrossed alternating chain $C$ and it does not hold neither $ a \prec_{-\sigma} b$ nor $b \prec_{-\sigma} a$, then $\TOrd{\dP}$ is not $2$-level. 
\end{prop}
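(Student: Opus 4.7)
Since the alternating chain $C$ is uncrossed, Theorem~\ref{thm:facets} combined with Corollary~\ref{cor:ineq} furnishes a facet-defining inequality $\text{L}_C(f,t) \le 1$ of $\TOrd{\dP}$. Unwinding Definition~\ref{def:altchain} for this length-$3$ chain (whose starting direction is $-\sigma$) gives $\ell_C(f) = \sigma(f(a) - f(b))$, and the last relation $b \prec_{-\sigma} \Ptop$ yields $\sign(C) = -\sigma$, so
\[
\text{L}_C(f,t) \ = \ \sigma\bigl(f(a) - f(b) + t\bigr).
\]
Note that the requirement $p_{k-1} \not\prec_{-\sigma} p_1$ for odd $k$ in Definition~\ref{def:altchain} is precisely $b \not\prec_{-\sigma} a$, so one half of the incomparability hypothesis is already consumed just to legitimize $C$ as an alternating chain.

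The plan is to exhibit three vertices of $\TOrd{\dP}$ on which $\text{L}_C$ takes three distinct values. The vertices of $\TOrd{\dP}$ are the points $(2\1_J, 1)$ for filters $J$ of $\P_+$ and $(-2\1_J, -1)$ for filters $J$ of $\P_-$. The empty filter of $\P_+$ supplies the vertex $(0,1)$ with $\text{L}_C = \sigma$, and the improper filter $J = \P$ of $\P_-$ supplies $(-2\1_\P, -1)$ with $\text{L}_C = -\sigma$.

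The third value comes from the principal filter $J_a := \{c \in \P : a \preceq_{-\sigma} c\}$ of $a$ in $\P_{-\sigma}$. The remaining half of the hypothesis, $a \not\prec_{-\sigma} b$, guarantees $b \notin J_a$, so $\1_{J_a}(a) = 1$ and $\1_{J_a}(b) = 0$. Writing $\epsilon := -\sigma \in \{\pm 1\}$, the corresponding vertex is $(2\epsilon\, \1_{J_a}, \epsilon)$ and one computes
\[
\text{L}_C(2\epsilon\, \1_{J_a}, \epsilon) \ = \ \sigma(2\epsilon - 0 + \epsilon) \ = \ 3\sigma\epsilon \ = \ -3.
\]
The three values $\{-3, -1, 1\}$ are pairwise distinct, so no single $c \in \R$ allows every vertex $v$ to satisfy $\text{L}_C(v) \in \{1,c\}$, and hence $\TOrd{\dP}$ is not $2$-level.

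The only real obstacle is sign bookkeeping: correctly extracting $\ell_C$ and $\sign(C)$ from Definition~\ref{def:altchain} for both choices of $\sigma$, and tracking that each half of the incomparability assumption plays a distinct role -- $b \not\prec_{-\sigma} a$ validating $C$ itself, while $a \not\prec_{-\sigma} b$ is what separates $a$ from $b$ by the principal filter and produces the telltale value $-3$.
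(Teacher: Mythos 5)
Your proof is correct and follows essentially the same strategy as the paper: use Theorem~\ref{thm:facets} and Corollary~\ref{cor:ineq} to see that the uncrossed chain $C$ gives a facet-defining inequality $\text{L}_C \le 1$, then exhibit three vertices of $\TOrd{\dP}$ on which $\text{L}_C$ takes the three distinct values $1$, $-1$, $-3$. The only (harmless) differences are in the choice of witnesses --- the paper takes three filters of $\P_{-\sigma}$ (one containing $b$ but not $a$, one containing $a$ but not $b$, and the empty one), all on the bottom facet, whereas you take the empty filter on top, the full filter on the bottom, and a single separating principal filter, correctly observing that the hypothesis $b \not\prec_{-\sigma} a$ is already absorbed into Definition~\ref{def:altchain} --- and in that you treat both signs $\sigma$ uniformly where the paper argues for $\sigma=+$ and appeals to symmetry.
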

\begin{proof}
Since $\TOrd{ \P, \preceq_+, \preceq_-}$ is $2$-level if and only if $\TOrd{\P,\preceq_-,\preceq_+}$ is $2$-level we only consider 
$\sigma = +$. \newline
Due to the fact that $C$ is uncrossed, the linear function $\ell_{C}$ is rigid. 
Then 
\begin{align*}
\text{L}_{C} (f,t)  & = f(a) - f(b) + t 
\end{align*}
is a facet-defining inequality of $\TOrd{\dP}$. Since $b  \not \prec_{-} a$, there is a filter $\Filter_1$ of $\P_-$ such that $b \in \Filter_1$ and $a \notin \Filter_1$. Since $a \not \prec_{-} b$, there is a filter $\Filter_2$ of $\P_-$ such that $a \in \Filter_2$ and $b \notin \Filter_2$. As well, there is a filter $\Filter_3 = \emptyset$ of $\P_-$. 
The vertices corresponding to these three filters let $\text{L}_{C} (f,t)$ take three different values:
\begin{align*}
\text{L}_{C} (-2 \Filter_1,-1) & = 0 - (-2) + (-1) = 1 \\
\text{L}_{C} (-2 \Filter_2,-1) & = -2 - 0 + (-1) = -3 \\
\text{L}_{C} (-2 \Filter_3,-1) & = 0 - 0 + (-1) = -1
\end{align*}
Hence $\TOrd{\dP}$ is not $2$-level. 
For $\sigma=-$, the proof works analogously.
\end{proof}

\begin{thm}\label{thm:2level}
Let $\dP= (\P, \preceq_+, \preceq_-)$ be a double poset and $\sigma \in \left\{\pm\right\}$. Then $\TOrd{\dP}$ is $2$-level if and only if for all $a, b \in \P$ such that $a \prec_{\sigma} b$ is part of an uncrossed alternating chain or cycle it holds that $a \prec_{-\sigma} b$ or $b \prec_{-\sigma} a$.
\end{thm}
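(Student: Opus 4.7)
The plan is to prove both implications. The forward direction produces three vertices on which a facet-defining functional $L_C$ takes three distinct values, and the reverse direction verifies $2$-levelness facet-by-facet using Corollary~\ref{cor:ineq}.

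\smallskip
For the forward direction, assume $\TOrd{\dP}$ is $2$-level and suppose, toward a contradiction, that some uncrossed alternating chain or cycle $C$ contains an edge $a \prec_\sigma b$ with $a$ and $b$ $\prec_{-\sigma}$-incomparable. Using the decomposition of $\ell_C$ as a sum of linear forms $\ell_{p, q}$ associated to the $\prec_{-\sigma}$-edges of $C$ (as in the proof of Lemma~\ref{lem:maxvalchain}), I express the value of $L_C$ on a $\P_{-\sigma}$-vertex $(\mp 2 \1_\Filter, \mp 1)$ as $c_{\max} - 2 s(\Filter)$, where $s(\Filter)$ counts the $\prec_{-\sigma}$-pairs of $C$ split by the filter. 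The $\prec_{-\sigma}$-incomparability of $a, b$ makes all four configurations of $(\1_\Filter(a), \1_\Filter(b)) \in \{0, 1\}^2$ realizable as $\P_{-\sigma}$-filters; in the length-$3$ case this immediately gives three filters realizing three distinct values of $L_C$ (matching Proposition~\ref{prop:level1}), and in the longer case it allows splitting simultaneously the two $\prec_{-\sigma}$-pairs of $C$ adjacent to $(a, b)$, producing filters with $s = 0, 1, 2$. The resulting three vertices then contradict $2$-levelness.

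\smallskip
For the reverse direction, assume the condition. Horizontal facets $t = \pm 1$ are trivially $2$-level, and by Theorem~\ref{thm:facets} together with Corollary~\ref{cor:ineq} every remaining facet
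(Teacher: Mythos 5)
There is a genuine gap: the proposal stops mid-sentence in the reverse direction, and that is where the real work of the theorem lies. Saying that the horizontal facets are harmless and invoking Theorem~\ref{thm:facets} and Corollary~\ref{cor:ineq} only reduces the problem to checking, for each uncrossed alternating chain or cycle $C$, that $\mathrm{L}_C$ takes exactly two values on the vertices of $\TOrd{\dP}$. You never carry out that check. The paper's argument here has two nontrivial components you would need to supply: first, that the hypothesis forces every uncrossed $C$ either to be the $2$-cycle $a \prec_{\sigma} b \prec_{-\sigma} a$ or to have \emph{every} relation doubled (each $c \prec_{\tau} d$ occurring in $C$ also satisfies $c \prec_{-\tau} d$, since $d \prec_{-\tau} c$ would make $C$ crossed by $c$); second, that for such a ``doubled'' chain $C'$ every filter of $\P_+$ and of $\P_-$ is compatible with all relations of $C'$, so $\ell_{C'}$ takes only the values $\{0,2\}$ on top vertices and $\{-2,0\}$ on bottom vertices (or vice versa according to $\sign(C')$), and the term $-\sign(C')\,t$ aligns these into the two values $\pm 1$. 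None of this is present.

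The forward direction is closer to complete and is in fact a reasonable direct alternative to the paper's reduction to Proposition~\ref{prop:level1}, but as written it also has holes. Realizing all four configurations of $(\1_{\Filter}(a),\1_{\Filter}(b))$ is not what you need: the quantity $s(\Filter)$ is governed by the neighbours $p_{i-1}$ and $p_{i+2}$ of the edge $a \prec_{\sigma} b$ in $C$, so to split both adjacent $\prec_{-\sigma}$-pairs you must exhibit a $\P_{-\sigma}$-filter containing $a$ and $p_{i+2}$ but not $b$ and not $p_{i-1}$ (this works, using antisymmetry and the incomparability of $a,b$, but has to be said). Even then you only get $s \ge 2$, not $s = 2$, and a single-split filter only gives $s \ge 1$; to guarantee three \emph{distinct} values of $\mathrm{L}_C$ you need an extra step, e.g.\ that adding one element at a time to the empty filter changes $s$ by at most one, so $s$ attains every integer between $0$ and its maximum. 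Finally, your formula $c_{\max} - 2s(\Filter)$ silently assumes all $\prec_{-\sigma}$-edges of $C$ are interior; when the edge adjacent to $(a,b)$ is a boundary edge involving $\Pbot$ or $\Ptop$ the corresponding term is $\mp f(p_1)$ or $f(p_{k-1})$ rather than a difference, and the bookkeeping changes. These forward-direction issues are fillable, but the absent reverse direction is not a matter of polish; the proof is incomplete.
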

\begin{proof}
Again, we consider only $\sigma = +$. For $\sigma = -$, the proof works analogously.\newline
If $b \prec_- a$, then $a \prec_+ b$ can only be part of the alternating cycle 
$$C = a \prec_+ b \prec_- a.$$ All other alternating chains or cycles would be crossed by $a$. The corresponding linear function of the double order polytope 
\begin{align*}
\text{L}_{C}(f,t) & = f(a) - f(b)
\end{align*}
defines a facet of $\TOrd{\dP}$. If $\Filter_+$ is a filter of $\P_+$, then $a \in \Filter_+$ implies $b \in \Filter_+$ 
and that is why $\text{L}_{C}(2 \1_{\Filter_+},1)=0$ or $\text{L}_{C}(2 \1_{\Filter_+},1)=-2$. If $\Filter_-$ is a filter of $\P_-$, then $b \in \Filter_-$ implies $a \in \Filter_-$ and that is why $\text{L}_{C}(-2 \1_{\Filter_+},-1)=0$ 
or $\text{L}_{C}(-2 \1_{\Filter_+},-1)=-2$.\newline
If $a \prec_- b$, then $a \prec_+ b$ can be part of an alternating chain or cycle $C'$ such that $C' \neq C$. In this case all other $c \prec_\tau d$ in $C'$ have to satisfy $c \prec_{-\tau} d$, where $\tau \in \left\{\pm\right\}$. Otherwise, if $d \prec_{-\tau} c$, then $C'$ would be crossed by $c$. Hence $C'$ is an alternating chain. Let $C'$ be the alternating chain 
$$\Pbot = p_0 \prec_{\tau} p_1 \prec_{-\tau} \dots \prec_{\pm} p_k = \Ptop.$$ If $\Filter$ is a filter of $P_{+}$ or $P_{-}$, then it follows from $p_i \in \Filter$ that $p_{i+1} \in \Filter$, since $p_i \prec_+ p_{i+1}$ and $p_{i} \prec_- p_{i+1}$ for $i = 0, \dots , k-1$. Let $\sign(C')=+$. 

If $\Filter_+ \subseteq P_+$, then $\ell_{C'}(2 \1_{\Filter_+})$ can only take the values 2 or 0 and if $\Filter_- \subseteq P_-$, then $\ell_{C'}(-2 \1_{\Filter_-})$ takes the values 0 and -2. The values are exchanged for $\sign(C')=-$. Hence 
\begin{align*}
\text{L}_{C'}(f, t ) & = \ell_{C'}(f) - \sign(C') t
\end{align*}
where $(f,t)$ is a vertex of $\TOrd{\dP}$ attains only the values -1 and 1. 
Thus $\TOrd{\dP}$ is $2$-level.
 \newline
\newline
Assume that $\TOrd{\dP}$ is $2$-level. If there are $a,b \in \P$ such that $a \prec_{\sigma} b$ is part of an uncrossed alternating chain or cycle and neither $a \prec_{-\sigma} b$ nor $b \prec_{-\sigma} a$, then it follows 
by Proposition~\ref{prop:level1} that $\TOrd{\dP}$ is not $2$-level.
\end{proof}

\section{Edges of general double order polytopes}
In this last section we determine the vertical edges of double order polytopes. The edges of an order polytope $\Ord{\dP}$ were determined by Stanley ~\cite{twoposet}: Edges correspond to pairs of filters $\Filter \subset \Filter'$ such that 
$\Filter' \setminus \Filter$ is a connected poset.
The vertical edges of $\TOrd{\dP}$ are in bijection to the vertices of $\DOrd{\dP}$ and the following theorem shows 
that they also correspond to certain pairs of filters $(\Filter_+, \Filter_-)$ where $\Filter_+ \subseteq \P_+$ and 
$\Filter_- \subseteq \P_-$.

\begin{thm}\label{cor:TO_vertical_edges}
    Let $\dP = (P,\preceq_+,\preceq_-)$ be a double poset and let $\Filter_+
    \subseteq \P_+$ and $\Filter_- \subseteq \P_-$ be filters.  Let $\Ideal_+
    := \P_+ \setminus \Filter_+$  and $\Ideal_- := \P_- \setminus \Filter_-$
    be the corresponding ideals.
    Then $(2\1_{\Filter_+},1)$ and $(-2\1_{\Filter_-},-1)$ are the endpoints
    of a vertical edge of $\TOrd{\dP}$ if and only if $\1_{\Filter_+} -
    \1_{\Filter_-}$ is a vertex of $\DOrd{\dP}$ if and only if the following
    hold:
    \begin{enumerate}[\rm (i)]
        \item for all $a \in \Filter_+ \cap \Filter_-$ there is an alternating
            chain 
            $$
               \Pbot \ \prec_{-\sigma} \ a_1 \ \prec_\sigma \ a_2 \
               \prec_{-\sigma} \ \cdots \ \prec_\pm \ a_k \ = \ a \ \prec_\mp \
               \Ptop,
            $$
            where $a_1 \in 
           \Filter_\sigma \setminus \Filter_{-\sigma}$ and 
            $a_2, \dots, a_{k} \in \Filter_+ \cap \Filter_-$.
        \item for all $b \in \Ideal_+ \cap \Ideal_-$ there is an alternating
            chain 
            $$
               \Pbot \ \prec_\pm \ b \ = \ b_1 \ \prec_\mp \ b_2
            \ \prec_{\pm} \ \cdots \ \prec_\sigma \ b_k \ \prec_{-\sigma} \Ptop,
            $$
            where $b_1, b_2, \dots, b_{k-1} \in \Ideal_+ \cap \Ideal_-$  and
            $b_k \in 
           \Ideal_\sigma \setminus \Ideal_{-\sigma}$.
    \end{enumerate}
		This generalizes the result of Chappell, Friedl and Sanyal in Corollary $2.17$~\cite{
		twodoubleposet}, since (i) implies that $\min{\Filter_+} \cap \min{\Filter_-} = \emptyset$
		and (ii) implies that $\max{ \P_+ \setminus \Filter_+ } \cap \max{ \P_- \setminus \Filter_-}
		= \emptyset$.
\end{thm}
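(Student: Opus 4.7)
The first equivalence --- that $(2\1_{\Filter_+},1)$ and $(-2\1_{\Filter_-},-1)$ bound a vertical edge of $\TOrd{\dP}$ iff $\1_{\Filter_+}-\1_{\Filter_-}$ is a vertex of $\DOrd{\dP}$ --- is immediate from $\DOrd{\dP}=\TOrd{\dP}\cap\{t=0\}$ combined with the fact that all vertices of $\TOrd{\dP}$ lie on $\{t=\pm 1\}$: every vertex of the slice arises as the transverse intersection of a $1$-face of $\TOrd{\dP}$ with $\{t=0\}$, and such a $1$-face must be vertical with endpoints of the stated form. For the main equivalence I set $v:=\1_{\Filter_+}-\1_{\Filter_-}$ and pass to tangent cones. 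Writing $T_\pm$ for the tangent cone of $\Ord{\P_\pm}$ at its vertex $\1_{\Filter_\pm}$, a direct argument from $\DOrd{\dP}=\Ord{\P_+}-\Ord{\P_-}$ gives $T_v(\DOrd{\dP})=T_+-T_-$. Since each $T_\pm$ is pointed (being a tangent cone at a vertex), a short lineality-space computation shows that $T_+-T_-$ is pointed iff $T_+\cap T_-=\{0\}$. The theorem thus reduces to: $T_+\cap T_-=\{0\}$ iff $(i)$ and $(ii)$.

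From the order polytope inequalities, $u\in T_\pm$ iff $u\le 0$ on $\Filter_\pm$, $u\ge 0$ on $\Ideal_\pm$, and $u(p)\le u(q)$ whenever $p\prec_\pm q$ with both $p,q$ in $\Filter_\pm$ or both in $\Ideal_\pm$. Consequently every $u\in T_+\cap T_-$ vanishes on $\Filter_+\triangle\Filter_-$, is nonpositive on $\Filter_+\cap\Filter_-$, and nonnegative on $\Ideal_+\cap\Ideal_-$. For sufficiency, fix $u\in T_+\cap T_-$ and $a\in\Filter_+\cap\Filter_-$; in the chain from $(i)$ the elements $a_2,\ldots,a_k$ lie in $\Filter_+\cap\Filter_-$ and $a_1\in\Filter_\sigma$, so every successive relation $a_i\prec_\tau a_{i+1}$ has both endpoints in $\Filter_\tau$. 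The monotonicity constraints telescope to $0=u(a_1)\le u(a_2)\le\cdots\le u(a_k)=u(a)$; combined with $u(a)\le 0$ this forces $u(a)=0$. Dually, $(ii)$ gives $u(b)=0$ for $b\in\Ideal_+\cap\Ideal_-$, so $u\equiv 0$ and $v$ is a vertex.

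For necessity I argue contrapositively. Suppose $(i)$ fails at some $a_0\in\Filter_+\cap\Filter_-$, and let $S\subseteq\P$ be the smallest set containing $a_0$ and closed under the rule ``$p\in S$, $q\prec_\sigma p$, $q\in\Filter_\sigma\Rightarrow q\in S$''. I claim $S\subseteq\Filter_+\cap\Filter_-$: were there $q\in S\cap(\Filter_\sigma\setminus\Filter_{-\sigma})$, a shortest descent path from $a_0$ to $q$ through $S$ would (by minimality) have all interior elements in $\Filter_+\cap\Filter_-$; compressing consecutive same-sign steps via transitivity of $\prec_\pm$ makes the path alternate, and its reverse is precisely a chain of type $(i)$ for $a_0$, contradicting the assumption. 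Given the claim, $u:=-\1_S$ lies in $T_+\cap T_-$ (the closure property of $S$ is exactly the monotonicity requirement, and $S\subseteq\Filter_+\cap\Filter_-$ supplies the sign conditions) and $u(a_0)=-1$, so $v$ is not a vertex. Failure of $(ii)$ is handled symmetrically via an upward-closed $T\subseteq\Ideal_+\cap\Ideal_-$ and $u:=\1_T$. The two consequences stated at the end of the theorem follow at once: if $a\in\min\Filter_+\cap\min\Filter_-$, the last step $a_{k-1}\prec_\pm a$ in any chain of $(i)$ would demand an element of $\Filter_\pm$ strictly below $a$, blocked by minimality; the maxima statement is dual. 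The main obstacle is the shortest-path-and-compression argument, where one must verify that the compressed chain genuinely alternates and that filter upward-closure makes each descent step a legitimate monotonicity constraint so that the reversed first step lies in $\Filter_\sigma\setminus\Filter_{-\sigma}$.
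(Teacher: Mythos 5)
Your proof is correct, but it takes a genuinely different route from the paper's, most visibly in the sufficiency direction. You reduce the whole question to the local criterion that $v=\1_{\Filter_+}-\1_{\Filter_-}$ is a vertex of the Minkowski sum if and only if $T_+\cap T_-=\{0\}$, where $T_\pm$ denotes the tangent cone of $\Ord{\P_\pm}$ at $\1_{\Filter_\pm}$; sufficiency then drops out of a telescoping of the monotonicity inequalities along the chains in (i) and (ii). The paper instead argues globally: for each $a\in\min\Filter_+$ it extracts from the chain in (i) a linear form $\ell_{+a}$, sums these together with the analogous forms for $\max(\P_+\setminus\Filter_+)$ and for $\Filter_-$, and uses Lemma~\ref{lem:maxvalchain} to check that the resulting functional $\ell_++\ell_-$ is uniquely maximized at $v$ over $\DOrd{\dP}$. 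Your version avoids having to verify uniqueness of that maximizer, at the price of invoking two standard but unproved facts (tangent cones of Minkowski sums add; pointedness of $T_+-T_-$ is equivalent to $T_+\cap T_-=\{0\}$), both of which do hold here. For necessity the two arguments are essentially the same construction in different clothing: your set $S$ is the paper's set $\mathcal{C}$ (the elements reachable from $a_0$ by descending steps that stay inside the appropriate filter), and your witness $u=-\1_S\in T_+\cap T_-$ corresponds exactly to the paper's alternative decomposition $\1_{\Filter_+}-\1_{\Filter_-}=\1_{\Filter_+\setminus\mathcal{C}}-\1_{\Filter_-\setminus\mathcal{C}}$. Two small points to tighten: in the shortest-path step you should choose $q$ to be an element of $S\setminus(\Filter_+\cap\Filter_-)$ at \emph{minimal descent distance} from $a_0$, rather than an arbitrary such $q$ together with a shortest path to it, so that minimality genuinely forces all interior elements into $\Filter_+\cap\Filter_-$; and the reversed compressed path automatically satisfies the extra odd-length condition of Definition~\ref{def:altchain}, since a relation $a\prec_{-\sigma}a_1$ would force $a_1\in\Filter_{-\sigma}$ by upward closure, contradicting $a_1\in\Filter_\sigma\setminus\Filter_{-\sigma}$.
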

\begin{proof}
		
    \newcommand\CC{\mathcal{C}}%
    From the definition of the reduced double order polytope 
    \[
        \TOrd{\dP} \cap \{(\phi,t) : t = 0\} \ = \ 
        (\Ord{\P_+}-\Ord{\P_-})  \times \{0\}
    \]  
    and the fact that $\1_{\Filter_+} - \1_{\Filter_-}$ is the midpoint
    between $(2\1_{\Filter_+},1)$ and $(-2\1_{\Filter_-},-1)$ the first
    equivalence follows.

    To show necessity, assume that (i) is violated for some element $a \in
    \Filter_+ \cap \Filter_-$. Let $\CC$ be the union of all alternating
    chains
    \begin{equation}\label{eqn:XX}
        \Pbot \ \prec_{-\sigma} \ a_1 \ \prec_\sigma \ a_2 \ \prec_{-\sigma} \
        \cdots \ \prec_\pm \ a_k \ = \ a \ \prec_\mp \ \Ptop,
    \end{equation}
    such that $a_1,\dots,a_k \in \Filter_+ \cap \Filter_-$. 
    
    We claim that $\Filter_+ \setminus \CC$ is a filter in $P_+$. 
    Otherwise there is an element $a_0 \in \Filter_+ \setminus \CC$ and an
    element $a_1 \in \CC$ such that $a_0 \prec_+ a_1$. Since $a_1 \in \CC$,
    there is an alternating chain of the form~\eqref{eqn:XX}. We can assume
    that $\sigma=-$. Otherwise, $a_0 \prec_+ a_2$ and we simply delete $a_1$
    from the  alternating chain. By construction $a_0 \in \Filter_+ \setminus \Filter_-$
    and the alternating chain $a_0 \prec_+ a_1 \prec_- \cdots \prec_\pm a_k = a$
    contradicts our assumption. 
    
    \newcommand\op{\mathrm{op}}%
    The same argument yields that $\Filter_- \setminus \CC$ is a filter in
    $P_-$.  Thus $\1_{\Filter_+} - \1_{\Filter_-} = \1_{\Filter_+ \setminus
    \CC} - \1_{\Filter_- \setminus \CC}$ and therefore $\1_{\Filter_+} -
    \1_{\Filter_-}$ cannot be a vertex of $\DOrd{\dP}$. The same argument
    shows necessity of (ii). Indeed, let us write $\P^\op$ for the poset $\P$
    with the opposite order relation. Filters of $\P^\op$ are ideals in $\P$
    and conversely and $\Ord{\P^\op} = \1 - \Ord{\P}$. In
    particular $\Ord{\P_+^\op} - \Ord{\P_-^\op} = \Ord{\P_-} - \Ord{\P_+} =
    -\DOrd{\dP}$. Since $\1_{\Filter_+} - \1_{\Filter_-}$ is vertex of
    $\DOrd{\dP}$ if and only if $\1_{\Filter_-} - \1_{\Filter_+} =
    \1_{\Ideal_+} - \1_{\Ideal_-}$ is a vertex of $-\DOrd{\dP}$. Condition
    (ii) is identical to condition (i) for the opposites of $P_+$ and $P_-$.

    For sufficiency, let $a \in \min{\Filter_+}$. If $a \in \Filter_+
    \setminus \Filter_-$, then set $\ell_{+a}(f) := f(a)$.
    If $a \in \Filter_+ \cap \Filter_-$, then let 
    \begin{equation}\label{eqn:YY}
        \Pbot \ \prec_{-\sigma} \ a_1 \ \prec_\sigma \ a_2 \ \prec_{-\sigma} \
        \cdots \ \prec_- \ a_k \ = \ a \ \prec_+ \ \Ptop,
    \end{equation}
    be a chain $C$ as in (i). Note $\sign(C) = +$ since $a \in \min
    \Filter_+$. Lemma~\ref{lem:maxvalchain}(i) yields that
    $\ell_{+a}(\1_{\Filter'_+}) \le 1 = \ell_{+a}(\1_{\Filter_+})$ for every
    filter $\Filter'_+ \subseteq \P_+$. Moreover, if
    $\ell_{+a}(\1_{\Filter'_+}) = 1$, then $a \in \Filter'_+$. Again by
    Lemma~\ref{lem:maxvalchain}(i), we have $\ell_{+a}(-\1_{\Filter'_-}) \le 0
    =\ell_{+a}(-\1_{\Filter_-})$ for all filter $\Filter'_- \subseteq \P_-$.
    Analogously, we use (ii) and define $\ell_{+b}$ for all $b \in \max
    P_+\setminus\Filter_+$. We set
    \[
        \ell_+(f) \ := \ 
        \sum_{a \in \min \Filter_+} \ell_{+a}(f)  + 
        \sum_{b \in \max \P_+ \setminus \Filter_+} \ell_{+b}(f) 
        \, .
    \]
    Then $\ell_+$ is maximized over $\DOrd{\dP}$ at points $\1_{\Filter_+} -
    \1_{\Filter'_-}$ for some $\Filter'_- \subseteq \P_-$. Importantly,
    $\1_{\Filter_+} - \1_{\Filter_-}$ is one of the maximizers.

    The same construction applied to $\Filter_-$ yields a function $\ell_-(f)$
    which is maximized over $\DOrd{\dP}$ at points $\1_{\Filter'_+} -
    \1_{\Filter_-}$ for some $\Filter'_+ \subseteq \P_+$.  Again,
    $\1_{\Filter_+} - \1_{\Filter_-}$ is one of the maximizers. It follows
    that the linear function $\ell_+ + \ell_-$ is uniquely maximized
    $\1_{\Filter_+} - \1_{\Filter_-}$ over $\DOrd{\dP}$.
\end{proof}

\bibliography{article}
\bibliographystyle{plain}

\end{document}